\title[Quadratic Invariants of Semiexplicit Integrators]{Preservation of Quadratic Invariants\\by Semiexplicit Symplectic Integrators\\for Non-separable Hamiltonian Systems}
\author{Tomoki Ohsawa}
\address{Department of Mathematical Sciences, The University of Texas at Dallas, 800 W Campbell Rd, Richardson, TX 75080-3021}
\email{tomoki@utdallas.edu}
\date{\today}
\keywords{Symplectic integrator, non-separable Hamiltonian, quadratic invariants}
\subjclass[2020]{37M15,65P10}
\numberwithin{equation}{section}
\newcommand{\cmark}{\ding{51}}
\newcommand{\xmark}{\ding{55}}
\theoremstyle{plain}
\newtheorem{theorem}{Theorem}
\newtheorem*{theorem*}{Theorem}
\newtheorem{lemma}[theorem]{Lemma}
\newtheorem{proposition}[theorem]{Proposition}
\theoremstyle{definition}
\newtheorem{example}[theorem]{Example}
\theoremstyle{remark}
\newtheorem{remark}[theorem]{Remark}
\def\od#1#2{\dfrac{d#1}{d#2}}
\def\dzero#1#2{\left.\od{}{#1} #2 \right|_{#1=0}}
\def\parentheses#1{{\left(#1\right)}}
\def\Span{\mathop{\mathrm{span}}\nolimits} % span << WARNING: Defining "span" makes gather and align impossible to be used. Hence "Span."
\def\norm#1{{\left\|#1\right\|}}
\def\DS{\displaystyle}
\def\R{\mathbb{R}}
\def\N{\mathbb{N}}
\def\defeq{\mathrel{\mathop:}=}
\def\eqdef{=\mathrel{\mathop:}}
\def\setdef#1#2{{\left\{ #1 \ |\ #2 \right\}}}
\def\diag{\operatorname{diag}}
\def\SO{\mathsf{SO}}
\def\Sp{\mathsf{Sp}}
\def\sp{\mathfrak{sp}}
\def\sym{\mathsf{sym}}
\def\dt{\Delta t}
\DeclareMathOperator{\sgn}{sgn}
\begin{document}

\footskip=.6in

\begin{abstract}
  We prove that the recently developed semiexplicit symplectic integrators for non-separable Hamiltonian systems preserve any linear and quadratic invariants possessed by the Hamiltonian systems. This is in addition to being symmetric and symplectic as shown in our previous work; hence it shares the crucial structure-preserving properties with some of the well-known symplectic Runge--Kutta methods such as the Gauss--Legendre methods. The proof follows two steps: First we show how the extended Hamiltonian system proposed by Pihajoki inherits linear and quadratic invariants in the extended phase space from the original Hamiltonian system. Then we show that this inheritance in turn implies that our integrator preserves the original linear and quadratic invariants in the original phase space. We also analyze preservation/non-preservation of these invariants by Tao's extended Hamiltonian system and the extended phase space integrators of Pihajoki and Tao. The paper concludes with numerical demonstrations of our results using a simple test case and a system of point vortices.
\end{abstract}

\maketitle

\section{Introduction}
\subsection{Extended Phase Space Integrators}
\label{sec:extended_integrators}
Consider the initial value problem of the Hamiltonian system
\begin{equation}
  \label{eq:Ham}
  \dot{z} = \mathbb{J}\, DH(z)
  \quad\text{or}\quad
  \left\{
    \begin{aligned}
      \dot{q} &= D_{2}H(q, p), \\
      \dot{p} &= -D_{1}H(q, p)
    \end{aligned}
  \right.
\end{equation}
with Hamiltonian $H\colon T^{*}\R^{d} \to \R$ and the initial condition $z(0) = (q(0), p(0)) = (q_{0}, p_{0})$, where
\begin{equation*}
  \mathbb{J} \defeq
  \begin{bmatrix}
    0 & I_{d} \\
    -I_{d} & 0
  \end{bmatrix},
\end{equation*}
and $D$ stands for the Jacobian (gradient in this case) and $D_{i}$ stands for the partial derivative with respect to the $i^{\rm th}$ set of variables.

We would like to numerically solve the initial value problem efficiently and accurately.
For efficiency, one would prefer explicit methods, whereas for accuracy, one prefers to use those integrators that preserve the underlying geometric structures of the system~\eqref{eq:Ham}, such as the symplecticity of its flow and its invariants or conserved quantities.

It turns out that achieving both efficiency and accuracy in the above sense is quite challenging for general non-separable Hamiltonians, i.e., those $H(q,p)$ that cannot be written as $K(p)+ V(q)$ with some functions $K$ and $V$.
While there exist some explicit symplectic integrators for certain classes of non-separable Hamiltonian systems~\cite{StLa2000,Bl2002,WuFoRo2003,McQu2004,Ch2009,Ta2016a,WaSuLiWu2021a,WaSuLiWu2021b,WaSuLiWu2021c,WuWaSuLi2021,WuWaSuLiHa2022}, the choice of symplectic integrators for other non-separable systems has been mostly limited to symplectic (partitioned) Runge--Kutta methods, which are known to be implicit in general.

The recent development of extended phase space integrators is an attempt to change this landscape.
Specifically, instead of solving \eqref{eq:Ham} directly, \citet{Pi2015} proposed to solve
\begin{equation}
  \label{eq:Pihajoki}
  \begin{aligned}
    \dot{q} &= D_{2}H(x, p), \qquad & \dot{p} &= -D_{1}H(q, y), \medskip\\
    \dot{x} &= D_{2}H(q, y), \qquad & \dot{y} &= -D_{1}H(x, p)
  \end{aligned}
\end{equation}
with the initial condition
\begin{equation*}
  (q(0), x(0), p(0), y(0)) = (q_{0}, q_{0}, p_{0}, p_{0}).
\end{equation*}

Notice that \eqref{eq:Pihajoki} is a Hamiltonian system defined on the extended phase space
\begin{equation*}
  T^{*}\R^{2d} = \setdef{ (q, x, p, y) }{ (q, x) \in \R^{2d},\ (p, y) \in T_{(q,x)}^{*}\R^{2d} \cong \R^{2d} } \cong \R^{4d}
\end{equation*}
with the extended Hamiltonian
\begin{equation}
  \label{eq:hatH}
  \hat{H}\colon T^{*}\R^{2d} \to \R;
  \qquad
  \hat{H}(q, x, p, y) \defeq H(q, y) + H(x, p).
\end{equation}
Its solution satisfies $(q(t), p(t)) = (x(t), y(t))$ for any $t \in \R$ (assuming that the solution exists and is unique), and $t \mapsto (q(t),p(t))$ coincides with the solution of the initial value problem of the original Hamiltonian system~\eqref{eq:Ham}.
Geometrically speaking, the subspace
\begin{equation}
  \label{eq:mathcalN}
  \mathcal{N} \defeq \setdef{ (q, q, p, p) \in T^{*}\R^{2d} }{ (q, p) \in T^{*}\R^{d} } \subset T^{*}\R^{2d}
\end{equation}
is an invariant submanifold of \eqref{eq:Pihajoki}, and the system~\eqref{eq:Pihajoki} restricted to $\mathcal{N}$ gives two copies of the original system~\eqref{eq:Ham}.

Let us write
\begin{equation*}
  \zeta = (q, x, p, y),
  \qquad
  \eta = (q, y),
  \qquad
  \xi = (x, p).
\end{equation*}
We note in passing that, throughout this paper, vectors are usually column vectors, but we often write column vectors as tuples to save space just like we did above.
Then, we may write the extended Hamiltonian~\eqref{eq:hatH} as
\begin{equation*}
  \hat{H}(\zeta)
  = H(\eta) + H(\xi),
\end{equation*}
and then write \eqref{eq:Pihajoki} as follows:
\begin{equation}
  \label{eq:Pihajoki-xi_eta}
  \dot{\xi} = \mathbb{J}\,DH(\eta),
  \qquad
  \dot{\eta} = \mathbb{J}\,DH(\xi).
\end{equation}

This form is reminiscent of what happens to the original Hamiltonian system \eqref{eq:Ham} when $H$ is separable:
\begin{equation*}
  \dot{q} = DK(p),
  \qquad
  \dot{p} = -DV(q).
\end{equation*}
One can then show that the St\"ormer--Verlet integrator is actually a Strang splitting~\cite{St1968} consisting of the following two flows: 
\begin{equation*}
  \left\{
    \begin{aligned}
      \dot{q} &= 0, \\
      \dot{p} &= -DV(q)
    \end{aligned}
  \right.
  \quad\text{and}\quad
  \left\{
    \begin{aligned}
      \dot{q} &= DK(p), \\
      \dot{p} &= 0.
    \end{aligned}
  \right.
\end{equation*}
\citet{Pi2015} proposed to do the same with \eqref{eq:Pihajoki-xi_eta}:
Let $\hat{\Phi}^{A}, \hat{\Phi}^{B}$ be the flows of
\begin{equation*}
  \left\{
    \begin{aligned}
      \dot{\eta} &= 0, \\
      \dot{\xi} &= \mathbb{J}\,DH(\eta)
    \end{aligned}
  \right.
  \quad\text{and}\quad
  \left\{
    \begin{aligned}
      \dot{\eta} &= \mathbb{J}\,DH(\xi), \\
      \dot{\xi} &= 0,
    \end{aligned}
  \right.
\end{equation*}
respectively, that is,
\begin{equation}
\label{eq:PhiAB}
  \hat{\Phi}^{A}_{t}\colon
  (\eta, \xi) \mapsto
  \parentheses{
    \eta,
    \xi + t\,\mathbb{J}\,DH(\eta)
  }
  \quad\text{and}\quad
  \hat{\Phi}^{B}_{t}\colon
  (\eta, \xi) \mapsto
  \parentheses{
    \eta + t\,\mathbb{J}\,DH(\xi),
    \xi
  }.
\end{equation}
Then the Strang splitting
\begin{equation}
  \label{eq:Pihajoki-Strang}
  \hat{\Phi}_{\dt} \defeq \hat{\Phi}^{A}_{\dt/2} \circ \hat{\Phi}^{B}_{\dt} \circ \hat{\Phi}^{A}_{\dt/2}
\end{equation}
gives a $2^{\rm nd}$-order explicit integrator with time step $\dt$ for the extended Hamiltonian system~\eqref{eq:Pihajoki}.

\subsection{Semiexplicit Integrator with Symmetric Projection}
\label{ssec:semiexplicit}
Unfortunately, Pihajoki's integrator~\eqref{eq:Pihajoki-Strang} has some issues: (i)~the numerical solution does not stay in the subspace $\mathcal{N}$, and even worse, the defect $(x - q, y - p)$ in the phase space copies $(q,p)$ and $(x,y)$ tends to grow in time numerically; (ii)~the method is symplectic in the \textit{extended} phase space $T^{*}\R^{2d}$ but not in the \textit{original} phase space $T^{*}\R^{d}$.

Various modifications of the extended phase space integrator have been proposed to mitigate the first issue, most notably by \citet{Ta2016b} (see also \Cref{sec:Tao}); see also \cite{LiWu2017,LiWuHuLi2016,LuWuHuLi2017,PhysRevD.104.044055,ZhZhLiWu2022} for relativistic dynamics with astrophysical applications.
However, none of them fully resolves both issues.

In the recent work~\cite{JaOh2023}, we proposed to address the first issue using the symmetric projection~(see, e.g., \citet[Section~V.4.1]{HaLuWa2006}) to the subspace $\mathcal{N}$:
First notice that the subspace $\mathcal{N}$ defined in \eqref{eq:mathcalN} is written as
\begin{equation}
  \label{eq:A}
  \mathcal{N} = \ker A
  \quad\text{with}\quad
  A \defeq
  \begin{bmatrix}
    I_{d} & -I_{d} & 0 & 0 \\
    0 & 0 & I_{d} & -I_{d}
  \end{bmatrix}.
\end{equation}
Then, using Pihajoki's extended phase space integrator $\hat{\Phi}_{\dt}$ from \eqref{eq:Pihajoki-Strang}, we defined our semiexplicit integrator as follows (see also \Cref{fig:scheme} below):
Given $z_{n} = (q_{n}, p_{n}) \in T^{*}\R^{d}$,
\begin{enumerate}
  \renewcommand{\theenumi}{\arabic{enumi}}
  \renewcommand{\labelenumi}{\sf\theenumi.}
\item $\zeta_{n} \defeq (q_{n}, q_{n}, p_{n}, p_{n})$;
\item Find $\mu \in \R^{2d}$ such that $\hat{\Phi}_{\dt}(\zeta_{n} + A^{T}\mu) + A^{T} \mu \in \mathcal{N}$;
  \label{step:shift1}
\item $\hat{\zeta}_{n} \defeq \zeta_{n} + A^{T} \mu$;
\item $\hat{\zeta}_{n+1} \defeq \hat{\Phi}_{\dt}(\hat{\zeta}_{n})$;
\item $\zeta_{n+1} = (q_{n+1}, q_{n+1}, p_{n+1}, p_{n+1}) \defeq \hat{\zeta}_{n+1} + A^{T} \mu$;
  \label{step:shift2}
\item $z_{n+1} \defeq (q_{n+1}, p_{n+1})$.
\end{enumerate}
Note that Steps~\ref{step:shift1}--\ref{step:shift2} combined are equivalent to solving the nonlinear equations
\begin{equation*}
  F_{\dt}(\zeta_{n+1}, \mu) \defeq
  \begin{bmatrix}
    \zeta_{n+1} - \hat{\Phi}_{\dt}(\zeta_{n} + A^{T}\mu) - A^{T}\mu \\
    A \zeta_{n+1}
  \end{bmatrix}
  = 0
\end{equation*}
for $(\zeta_{n+1}, \mu) \in \R^{2d} \times T^{*}\R^{2d}$, or eliminating $\zeta_{n+1}$, the following nonlinear equation for $\mu$:
\begin{equation*}
  f_{\dt}(\mu) \defeq A\parentheses{ \hat{\Phi}_{\dt}(\zeta_{n} + A^{T}\mu) + A^{T}\mu } = 0.
\end{equation*}
  
\begin{figure}[hbtp]
  \centering
  \includegraphics[width=.6\linewidth]{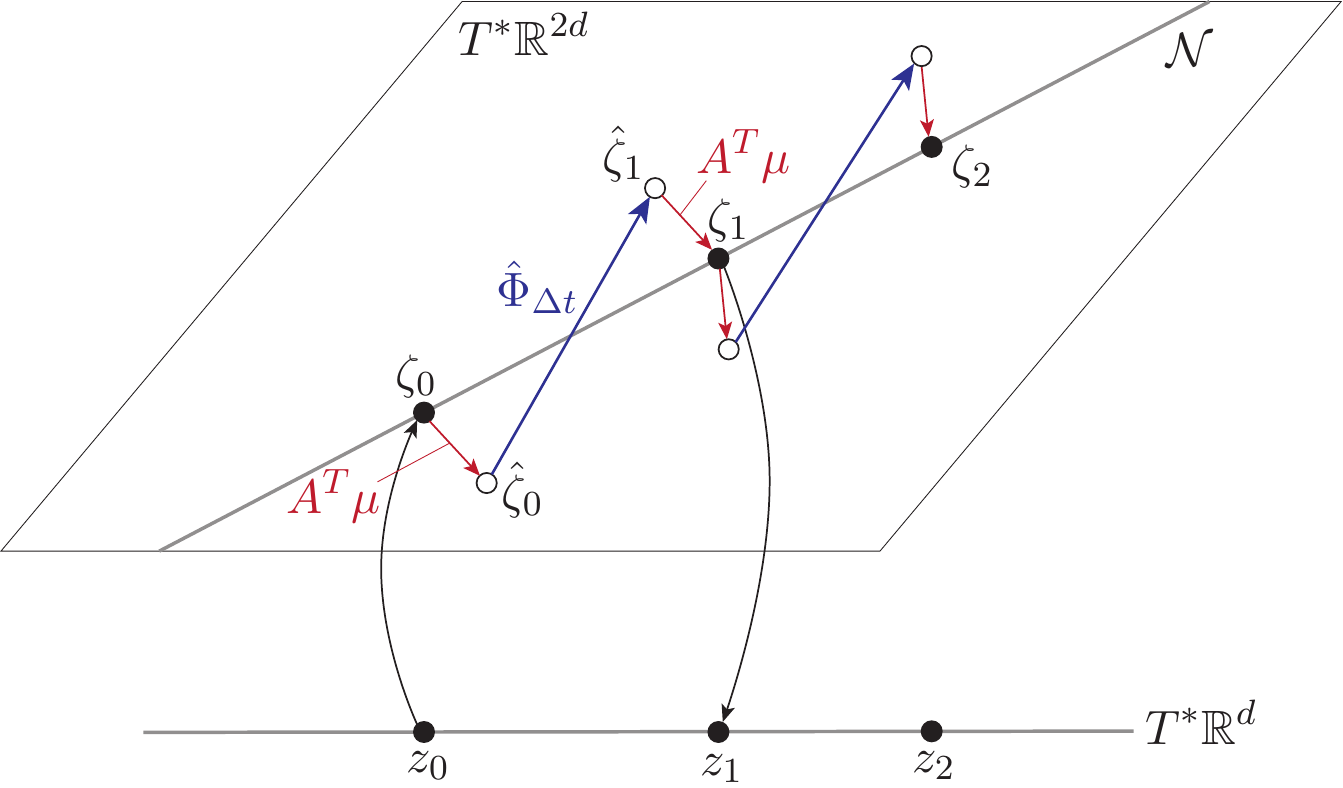}
  \caption{Extended phase space integrator with symmetric projection~\cite{JaOh2023}.}
  \label{fig:scheme}
\end{figure}

One may construct a higher-order method by replacing $\hat{\Phi}$ by a higher-order composition of the 2nd-order method~\eqref{eq:Pihajoki-Strang}, such as the the Triple Jump, Suzuki's, and Yoshida's compositions~\cite{CrGo1989,Forest1989,Su1990,Yo1990}; see also our previous work \cite[Section~4.1]{JaOh2023} for details.

It turns out that the above semiexplicit integrator not only eliminates the defect $(x - q, y - p)$, but also is symplectic in the \textit{original} phase space $T^{*}\R^{d}$~\cite{JaOh2023}, hence resolving the second issue mentioned above as well.
Additionally, it is also symmetric by construction.
Moreover, by using a simplified Newton's method or the quasi-Newton method of \citet{broyden1965class} and a small enough time step $\dt$, the implicit step of solving nonlinear equation tends to be fast; as a result, our method is comparable in speed to and sometimes faster than the fully explicit method of \citet{Ta2016b} and the symplectic Runge--Kutta methods, especially for higher-order implementations; see \Cref{sec:efficiency} for numerical results on efficiency.

\subsection{Main Result}
This paper addresses the preservation of linear and quadratic invariants by our semiexplicit integrator---yet another desired property for structure-preserving integrators in addition to symmetry and symplecticity.

It is well known that the St\"ormer--Verlet method is a special case of the partitioned Runge--Kutta method with the 2-stage Lobatto IIIA--IIIB pair applied to a separable Hamiltonian; see, e.g., \citet[Section~8.5.3]{SaCa2018}, \citet[Section~6.3.2]{LeRe2004}, \citet[Section~II.2.1]{HaLuWa2006}, and also \citet{Ge1993}.
Such methods applied to \eqref{eq:Ham} are known to preserve linear and quadratic invariants of the form $a^{T}z$ and $q^{T} W p$, respectively, with $a \in \R^{2d}$ and $W \in \R^{d \times d}$; see, e.g., \cite[Section~6.3.2]{LeRe2004} and \cite[Theorems~IV.2.3]{HaLuWa2006}.

Given that the time evolution part of our method is the extended-phase-space analogue~\eqref{eq:Pihajoki-Strang} of the St\"ormer--Verlet method, one may expect that the best we can hope for with our integrator would be to preserve quadratic invariants of the form $q^{T} W p$ but not a general quadratic invariants of the form $z^{T} \Sigma z$ with symmetric $\Sigma \in \R^{2d\times 2d}$.
Such a limitation is not desirable for an integrator for non-separable Hamiltonian systems because they often possess invariants of the form $q^{T} M q + p^{T} N p$ with symmetric $M,N \in \R^{d\times d}$.

Our main result is that our integrator preserves any linear and quadratic invariants of the original Hamiltonian system~\eqref{eq:Ham} without such a limitation:
\begin{theorem}
  \label{thm:main}
  The semiexplicit integrator~\cite{JaOh2023} defined in \Cref{ssec:semiexplicit} preserves any linear and quadratic invariants of the original Hamiltonian system~\eqref{eq:Ham}.
\end{theorem}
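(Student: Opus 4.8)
The plan is to follow the two-step strategy of the abstract, the crux being to \emph{lift} each invariant of \eqref{eq:Ham} to a function on the extended phase space that is preserved by Pihajoki's integrator $\hat\Phi_{\dt}$ and that restricts on $\mathcal N$ to a nonzero multiple of the original invariant. Recall that $a^{T}z$ (with $a=(a_{1},a_{2})\in\R^{2d}$) is a linear invariant of \eqref{eq:Ham} iff $a^{T}\mathbb{J}\,DH(z)=0$ for all $z$, and that $z^{T}\Sigma z$ (with $\Sigma=\Sigma^{T}$) is a quadratic invariant iff $z^{T}\Sigma\,\mathbb{J}\,DH(z)=0$ for all $z$. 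With $\eta=(q,y)$ and $\xi=(x,p)$ as in \eqref{eq:Pihajoki-xi_eta}, I would use the lifts
\[
  \hat I_{\mathrm{lin}}(\zeta)\defeq a^{T}\eta+a^{T}\xi,
  \qquad
  \hat I_{\mathrm{quad}}(\zeta)\defeq \eta^{T}\Sigma\,\xi,
\]
the second being \emph{bilinear} --- a cross term between the two halves $\eta,\xi$ rather than a general quadratic in $\zeta$.

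\textbf{Step 1 (inheritance in the extended space).} First I would check that these lifts are invariants of \eqref{eq:Pihajoki-xi_eta} and, more importantly, that they are preserved exactly by each shear flow in \eqref{eq:PhiAB}. For the quadratic lift, $\hat I_{\mathrm{quad}}(\hat\Phi^{A}_{t}(\eta,\xi))=\eta^{T}\Sigma(\xi+t\,\mathbb{J}\,DH(\eta))=\eta^{T}\Sigma\xi+t\,\eta^{T}\Sigma\,\mathbb{J}\,DH(\eta)$, and the last term vanishes by the quadratic-invariance identity applied at $z=\eta$; the flow $\hat\Phi^{B}_{t}$ is handled symmetrically using the same identity at $z=\xi$ (after transposing and using $\mathbb{J}^{T}=-\mathbb{J}$, $\Sigma^{T}=\Sigma$). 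The linear lift is even simpler. Consequently the Strang composition $\hat\Phi_{\dt}$ of \eqref{eq:Pihajoki-Strang} preserves both lifts. This is precisely the mechanism by which a \emph{general} quadratic invariant of \eqref{eq:Ham} turns into a cross bilinear invariant of the extended system --- exactly the form (like $q^{T}Wp$) that St\"ormer--Verlet-type splittings are known to conserve.

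\textbf{Step 2 (descent to the original space).} Since $\zeta_{n},\zeta_{n+1}\in\mathcal N$, where $x=q$ and $y=p$, both lifts restrict to the original invariant up to a constant: $\hat I_{\mathrm{quad}}(\zeta_{n})=z_{n}^{T}\Sigma z_{n}$ and $\hat I_{\mathrm{lin}}(\zeta_{n})=2\,a^{T}z_{n}$, so it suffices to prove $\hat I(\zeta_{n+1})=\hat I(\zeta_{n})$. Writing $\hat I_{\mathrm{quad}}(\zeta)=\zeta^{T}\hat\Sigma\zeta$ with $\hat\Sigma$ the symmetrization of the bilinear form, and using that $\hat\Phi_{\dt}$ preserves $\hat I$ together with $\hat\zeta_{n}=\zeta_{n}+A^{T}\mu$, $\hat\zeta_{n+1}=\zeta_{n+1}-A^{T}\mu$, expanding the quadratic gives
\[
  \hat I_{\mathrm{quad}}(\zeta_{n+1})-\hat I_{\mathrm{quad}}(\zeta_{n})
  = 2\,(\zeta_{n}+\zeta_{n+1})^{T}\hat\Sigma\,A^{T}\mu ;
\]
the \emph{identical} shift $A^{T}\mu$ before and after $\hat\Phi_{\dt}$ --- the hallmark of the symmetric projection --- is exactly what cancels the terms quadratic in $\mu$. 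It remains to see this correction vanishes, and the decisive fact is that $\hat\Sigma$ maps $\mathcal N$ into $\mathcal N=\ker A$: for $(q,q,p,p)\in\mathcal N$ one computes $\hat\Sigma(q,q,p,p)=\tfrac12(w_{1},w_{1},w_{2},w_{2})\in\mathcal N$ with $(w_{1},w_{2})=\Sigma(q,p)$. As $\zeta_{n}+\zeta_{n+1}\in\mathcal N$, this yields $A\hat\Sigma(\zeta_{n}+\zeta_{n+1})=0$ and hence $z_{n+1}^{T}\Sigma z_{n+1}=z_{n}^{T}\Sigma z_{n}$. The linear case is identical: the correction is a multiple of $(Ac)^{T}\mu$ with $c=(a_{1},a_{1},a_{2},a_{2})\in\mathcal N$, so $Ac=0$.

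I expect the main obstacle to be the conceptual step in Step 1 --- recognizing that the problematic general quadratic invariant $z^{T}\Sigma z$ must be lifted not to a general quadratic but to the bilinear cross form $\eta^{T}\Sigma\xi$, which the explicit shears conserve exactly; once this lift is chosen, invariance under $\hat\Phi_{\dt}$ is a one-line check. The only other point needing care is that the symmetric projection does not destroy the lift, which reduces cleanly to the inclusions $\hat\Sigma(\mathcal N)\subseteq\mathcal N$ and $c\in\mathcal N$; here it is essential that the projection is \emph{symmetric} (the same multiplier $A^{T}\mu$ on both sides), since a one-sided projection would leave an uncanceled linear-in-$\mu$ term.
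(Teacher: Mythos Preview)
Your proof is correct and follows the same two-step strategy as the paper: lift the invariant to the cross form $\eta^{T}\Sigma\xi$ on the extended space, verify Pihajoki's splitting preserves it, then show the symmetric projection does not spoil it. The execution differs in two places, and in both your version is somewhat more direct. In Step~1 the paper appeals to the general fact that the St\"ormer--Verlet/Lobatto IIIA--IIIB pair preserves invariants of the form $\eta^{T}M\xi$ (via \cite[Theorem~IV.2.3]{HaLuWa2006}), whereas you simply check each shear $\hat\Phi^{A}_{t},\hat\Phi^{B}_{t}$ by hand using the identity $z^{T}\Sigma\,\mathbb{J}\,DH(z)=0$; this is more self-contained. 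In Step~2 the paper introduces an auxiliary form $\bar\kappa$ with $\bar Q_{\kappa}(\zeta_{i})=Q_{\kappa}(z_{i})$ and then proves the matrix identity $\bar\kappa(I_{4d}-A^{T}A)=\hat\kappa$ (combined with $\mu=\pm\tfrac12 A\hat\zeta_{i}$) to collapse the difference to $\hat Q_{\kappa}(\hat\zeta_{1})-\hat Q_{\kappa}(\hat\zeta_{0})$. You bypass $\bar\kappa$ entirely by observing that $\hat I_{\mathrm{quad}}$ \emph{already} restricts to $z^{T}\Sigma z$ on $\mathcal N$, and the cancellation reduces to the single geometric fact $\hat\Sigma(\mathcal N)\subseteq\mathcal N$; this is arguably the cleaner formulation of the same algebra. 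Either route yields the result, and your emphasis on why the \emph{symmetry} of the projection is essential (the $\mu$-quadratic terms cancel only because the same $A^{T}\mu$ appears on both sides) is a nice point the paper leaves implicit.
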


\begin{remark}
  The same statement holds for any higher-order semiexplicit method constructed by replacing the $2^{\rm nd}$-order integrator \eqref{eq:Pihajoki-Strang} by its higher-order variant using the Triple Jump composition (see \cite{CrGo1989,Forest1989,Su1990,Yo1990} and \cite[Example~II.4.2]{HaLuWa2006}) or those of \citet{Su1990} and \citet{Yo1990} (see also \cite[Example~II.4.3, Section~V.3.2]{HaLuWa2006}).
  These higher order integrators are tested in \cite{JaOh2023} as well.
\end{remark}

\begin{remark}
  As we shall discuss later, neither Pihajoki's nor Tao's~\cite{Ta2016b} integrator has the property described in the above theorem for quadratic invariants in a strict sense.
\end{remark}

To our knowledge, the only integrators for general non-separable Hamiltonian systems that are symmetric, symplectic in the original phase space $T^{*}\R^{d}$, and preserve any linear and quadratic invariants are symplectic Runge--Kutta methods, such as the Gauss--Legendre methods; see \citet{Co1987} and also \cite[Section~6.3.1]{LeRe2004} and \cite[Theorems~IV.2.1 and IV.2.2]{HaLuWa2006}.

\subsection{Outline}
\label{ssec:outline}
We shall show \Cref{thm:main} in the rest of the paper.
\Cref{fig:overview} provides an overview of our argument for quadratic invariants; a similar picture applies to linear invariants.

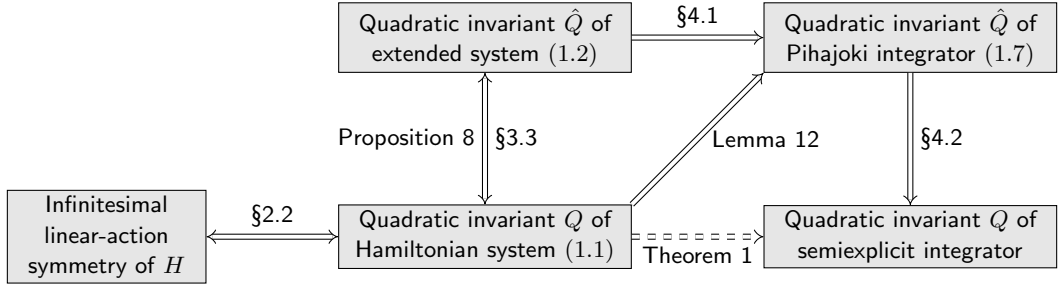
\begin{figure}[htbp]
  \centering
  \begin{tikzpicture}[node distance = 1.75cm, auto, font=\footnotesize\sffamily,
    block/.style={rectangle, draw, fill=black!10, inner sep=2pt, text width=3.75cm, text badly centered, minimum height=0.7cm, font=\footnotesize\sffamily}]
    % Blocks
    % \node[block, text width=2cm] (G-sym) {$\mathsf{G}$-sym~of $H$};
    % \node[block, text width=2cm, below of G-sym] (G-sym-ext) {$\mathsf{G}$-sym~of $\hat{H}$};
    %\node[block, text width=2.5cm] (g-sym-ext) {$\mathfrak{g}$-symmetry of $\hat{H}$};
    %\node[block] (quad-ext) [right=of g-sym-ext] {Quadratic invariant $\hat{Q}$ of extended system~\eqref{eq:Pihajoki}};
    %
    \node[block, text width=2.5cm] (g-sym) {Infinitesimal linear-action symmetry of $H$};
    \node[block] (quad) [right=of g-sym] {Quadratic invariant $Q$ of Hamiltonian system~\eqref{eq:Ham}};
    \node[block] (main) [right=of quad] {Quadratic invariant $Q$ of semiexplicit integrator};
    \node[block] (quad-ext) [above=of quad] {Quadratic invariant $\hat{Q}$ of extended system~\eqref{eq:Pihajoki}};
    \node[block] (quad-P) [right=of quad-ext] {Quadratic invariant $\hat{Q}$ of Pihajoki integrator~\eqref{eq:Pihajoki-Strang}};
    %
    % Equivalence
    \draw[double distance=1.5pt, {Implies[]}-{Implies[]}]
    (g-sym) -- (quad) node[above,midway] {\S\ref{ssec:quad_inv}};
    \draw[double distance=1.5pt, {Implies[]}-{Implies[]}]
    (quad) -- (quad-ext) node[left,midway] {\Cref{prop:inheritance}} node[right,midway] {\S\ref{ssec:quad_inv-ext}};
    % Implies
    \draw[double distance=1.5pt, -{Implies[]}]
    (quad-ext) -- (quad-P) node[above,midway] {\S\ref{ssec:Pihajoki}};
    \draw[double distance=1.5pt, -{Implies[]}]
    (quad-P) -- (main) node[right,midway] {\S\ref{ssec:proof}};
    \draw[dashed,double distance=1.5pt, -{Implies[]}]
    (quad) -- (main) node[below,midway] {\Cref{thm:main}};
    \draw[double distance=1.5pt, -{Implies[]}]
    (quad.north east) -- (quad-P.south west) node[right,midway] {\,\Cref{lem:Pihajoki}};
  \end{tikzpicture}
  \caption{Overview of our results on quadratic invariants. A similar picture applies to linear invariants although we do not discuss the corresponding symmetry.}
  \label{fig:overview}
\end{figure}

Since the main focus is on quadratic invariants, we first give, in \Cref{sec:symmetry}, a review of the relationship between the symmetry by linear actions and quadratic invariants of the original Hamiltonian system~\eqref{eq:Ham}.
In \Cref{sec:invariants-ext}, we show how such symmetry and linear and quadratic invariants are inherited by the extended Hamiltonian system~\eqref{eq:Pihajoki}.
In \Cref{sec:proof}, we give a proof of \Cref{thm:main} after discussing a conservation law of Pihajoki's integrator~\eqref{eq:Pihajoki-Strang} as a key lemma.
Finally, in \Cref{sec:numerical_results}, we first discuss and summarize conservation and non-conservation of these invariants for those extended phase space integrators of Pihajoki, Tao, and ours.
We then test these three integrators numerically to demonstrate these properties including \Cref{thm:main}.

\section{Symmetry and Quadratic Invariants of Hamiltonian Systems}
\label{sec:symmetry}
This section gives a review of symmetry and conservation laws in Hamiltonian systems focusing on linear and quadratic invariants.
We do not discuss symmetry behind linear invariants here, because it is more straightforward to focus on the linear invariants themselves without delving into the (translational) symmetries behind them.

However, we shall discuss in detail the relationship between the symmetry under linear actions and quadratic invariants, because the underlying algebraic structure gives a better idea of how quadratic invariants are inherited by the extended system, as we shall see in \Cref{sec:invariants-ext}.
The upshot is that an infinitesimal symmetry under a linear action implies a quadratic invariant, and vice versa.

\subsection{Symmetry under Linear Action}
Let us define the symplectic group
\begin{equation*}
  \Sp(2d,\R) \defeq \setdef{ S \in \R^{2d\times2d} }{ S^{T} \mathbb{J} S = \mathbb{J} }
\end{equation*}
or equivalently, written as block matrices consisting of $d \times d$ submatrices,
\begin{equation}
  \label{def:Sp2dR-block}
  \Sp(2d,\R) \defeq
  \setdef{
    \begin{bmatrix}
      \mathsf{A} & \mathsf{B} \\
      \mathsf{C} & \mathsf{D}
    \end{bmatrix}
    \in \R^{2d\times2d}
  }{
    \mathsf{A}^{T}\mathsf{C} = \mathsf{C}^{T}\mathsf{A},\,
    \mathsf{B}^{T}\mathsf{D} = \mathsf{D}^{T}\mathsf{B},\,
    \mathsf{A}^{T}\mathsf{D} - \mathsf{C}^{T}\mathsf{B} = I_{d}
  }.
\end{equation}

Let $\mathsf{G}$ be a matrix Lie subgroup of $\Sp(2d,\R)$, and consider the standard action of $\mathsf{G}$ on $T^{*}\R^{d}$ by matrix-vector multiplication:
\begin{equation*}
  \Psi\colon \mathsf{G} \times T^{*}\R^{d} \to T^{*}\R^{d};
  \qquad
  \parentheses{ S, z =
    \begin{bmatrix}
      q \\
      p
    \end{bmatrix}
  } \mapsto S z \eqdef \Psi_{S}(z).
\end{equation*}
Then the action is symplectic because
\begin{equation*}
  (D\Psi_{S})^{T}\, \mathbb{J}\, D\Psi_{S} = \mathbb{J}
  \iff
  S^{T} \mathbb{J} S = \mathbb{J}
  \iff
  S \in \Sp(2d,\R),
\end{equation*}
where $D$ denotes the Jacobian.

The Hamiltonian $H\colon T^{*}\R^{d} \to \R$ of the original system~\eqref{eq:Ham} is said to have $\mathsf{G}$-symmetry if
\begin{equation}
  \label{eq:G-symmetry}
  H \circ \Psi_{S} = H
  \iff
  H(S z) = H(z)
  \quad
  \forall
  S \in \mathsf{G}
  \quad
  \forall z \in T^{*}\R^{d}.
\end{equation}
We also say that $\mathsf{G}$ is a \textit{symmetry group} of the Hamiltonian $H$ or the Hamiltonian system~\eqref{eq:Ham} if the above is satisfied.

\begin{example}[Finite-dimensional NLS]
  \label{ex:NLS}
  As a finite-dimensional approximation to the nonlinear Schr\"odinger equation (NLS), \citet{CoKeStTaTa2010} (see also \citet{Ta2016b}) gave the Hamiltonian system~\eqref{eq:Ham} with $d = N$ and the following non-separable Hamiltonian:
  \begin{equation}
    \label{eq:H-NLS}
    H(q,p) = \frac{1}{4} \sum_{i=1}^{N} \bigl(q_i^2 + p_i^2\bigr)^2 - \sum_{i=2}^{N} \bigl( p_{i-1}^2 p_{i}^2 + q_{i-1}^2 q_{i}^2 - q_{i-1}^2 p_{i}^2 - p_{i-1}^2 q_{i}^2 + 4p_{i-1}p_{i}q_{i-1}q_{i} \bigr).
  \end{equation}
  Consider the subgroup
  \begin{equation*}
    \mathsf{G} \defeq \setdef{
      \begin{bmatrix}
        (\cos\theta) I_{d} & -(\sin\theta) I_{d} \\
        (\sin\theta) I_{d} & (\cos\theta) I_{d}
      \end{bmatrix}
      \in \R^{2d\times2d}
    }{ \theta \in \R }.
    \end{equation*}
  It is essentially $\SO(2)$, and in fact defines a homomorphism from $\SO(2)$ to $\Sp(2d,\R)$ and hence a subgroup $\mathsf{G}$ of $\Sp(2d,\R)$.
  Then we see that the Hamiltonian~\eqref{eq:H-NLS} possesses $\mathsf{G}$-symmetry in the sense that \eqref{eq:G-symmetry} holds.
  One may certainly take $\mathsf{G} = \SO(2)$ and define a group action $\Psi$ accordingly, but we would rather like to have $\mathsf{G}$ as a subgroup of $\Sp(2d,\R)$ because it gives a unified approach on quadratic invariants as we shall see in a moment.
\end{example}

\subsection{Infinitesimal Symmetry and Quadratic Invariants}
\label{ssec:quad_inv}
Let $\sp(2d,\R)$ be the Lie algebra of $\Sp(2d,\R)$, i.e.,
\begin{equation*}
  \sp(2d,\R) \defeq \setdef{
    \varkappa \in \R^{2d\times2d}
  }{
    \varkappa^{T}\mathbb{J} + \mathbb{J} \varkappa = 0
  }.
\end{equation*}
Instead of working directly with elements in $\sp(2d,\R)$, it is often more convenient to work with the space
\begin{equation*}
  \sym(2d,\R) \defeq \setdef{ \kappa \in \R^{2d\times2d} }{ \kappa^{T} = \kappa }
\end{equation*}
of real symmetric $2d \times 2d$ matrices via the following identification:
\begin{equation}
  \label{eq:kappas}
  \sym(2d,\R) \leftrightarrow \sp(2d,\R);
  \qquad
  \kappa =
  \begin{bmatrix}
    \kappa_{11} & \kappa_{12} \\
    \kappa_{12}^{T} & \kappa_{22}
  \end{bmatrix}
  = \mathbb{J}^{T} \varkappa
  \leftrightarrow
  \varkappa
  = \mathbb{J} \kappa
  =
  \begin{bmatrix}
    \kappa_{12}^{T} & \kappa_{22} \\
    -\kappa_{11} & -\kappa_{12}
  \end{bmatrix},
\end{equation}
where $\kappa_{12}$ is a $d \times d$ real (not necessarily symmetric) matrix, and $\kappa_{11}, \kappa_{22} \in \sym(d,\R)$.

Now, let $\mathfrak{g}$ be the Lie algebra of the symmetry group $\mathsf{G} \subset \Sp(2d,\R)$ of the Hamiltonian $H$.
Then $\mathfrak{g}$ is a subalgebra of $\sp(2d,\R)$, which can be identified with the subspace
\begin{equation}
  \label{eq:g_sym}
  \mathfrak{g}_{\sym} \defeq \mathbb{J}^{T} \mathfrak{g}
  = \setdef{ \kappa \defeq \mathbb{J}^{T} \varkappa \in \sym(2d,\R) }{ \varkappa \in \mathfrak{g} }
  \subset
  \sym(2d,\R).
\end{equation}
Then, for any $\varkappa = \mathbb{J} \kappa \in \sp(2d,\R)$, we may define a vector field called the \textit{infinitesimal generator} as follows:
\begin{equation}
  \label{eq:inf_gen}
  \kappa_{T^{*}\R^{d}}(z)
  \defeq \dzero{s}{ \exp(s \varkappa) z }
  = \varkappa z
  = \mathbb{J} \kappa z
  = \begin{bmatrix}
    \kappa_{12}^{T} q + \kappa_{22} p \\
    -\kappa_{11} q - \kappa_{12} p
  \end{bmatrix}.
 \end{equation}
Intuitively, this gives the infinitesimal symmetry direction of the Hamiltonian $H$.
Indeed, since $\exp(s \varkappa) \in \mathsf{G}$ for any $s \in \R$ and any $\varkappa \in \mathfrak{g}$, \eqref{eq:G-symmetry} implies $H( \exp(s \varkappa) z) = H(z)$, and taking the derivative of both sides with respect to $s$ at $s = 0$, we have
\begin{equation}
  \label{eq:g-symmetry}
  \kappa_{T^{*}\R^{d}}(z)^{T} DH(z) = 0
  \quad
  \forall
  \kappa \in \mathfrak{g}_{\sym}
  \quad
  \forall z \in T^{*}\R^{d},
\end{equation}
showing the infinitesimal invariance of $H$ in the directions defined by $\kappa_{T^{*}\R^{d}}$.
Hence we shall refer to it as an \textit{infinitesimal symmetry} (or $\mathfrak{g}$-symmetry) of $H$.
This is what the lower left box in \Cref{fig:overview} signifies.

What is the associated Noether invariant?
For any $\kappa \in \mathfrak{g}_{\sym}$, define
\begin{equation}
  \label{eq:Q}
  Q_{\kappa}(z) \defeq \frac{1}{2} z^{T} \kappa z
  = \frac{1}{2} q^{T} \kappa_{11} q + q^{T} \kappa_{12} p + \frac{1}{2} p^{T} \kappa_{22} p
\end{equation}
so that one has
\begin{equation*}
  \kappa_{T^{*}\R^{d}}(z) = \mathbb{J}\,DQ_{\kappa}(z)
  \quad
  \forall
  z \in T^{*}\R^{d}.
\end{equation*}
Then, taking the Poisson bracket of $Q_{\kappa}$ and $H$, 
\begin{align*}
  \{Q_{\kappa}, H\}(z)
  &= DQ_{\kappa}(z)^{T} \mathbb{J} DH(z) \\
  &= -(\mathbb{J}\,DQ_{\kappa}(z))^{T} DH(z) \\
  &= -\kappa_{T^{*}\R^{d}}(z)^{T} DH(z).
\end{align*}
Therefore, the infinitesimal symmetry~\eqref{eq:g-symmetry} implies that $Q_{\kappa}$ is an invariant of Hamiltonian system \eqref{eq:Ham} for any $\kappa \in \mathfrak{g}_{\sym}$.

Conversely, suppose that \eqref{eq:Ham} possesses a quadratic invariant.
One can find $\kappa \in \sym(2d,\R)$ so that the invariant is written as $Q_{\kappa}$ as in \eqref{eq:Q}.
Then $\{Q_{\kappa}, H\}(z) = 0$ for any $z \in T^{*}\R^{d}$, and thus the above equality implies the infinitesimal symmetry~\eqref{eq:g-symmetry} for that particular $\kappa$.

For a family of quadratic invariants $\{ \mathcal{I}_{i} \}_{i=1}^{k}$, one may find $\{ \kappa_{i} \}_{i=1}^{k} \subset \sym(2d,\R)$ so that $Q_{\kappa_{i}} = \mathcal{I}_{i}$ for $i \in \{1, \dots, k\}$.
Then, setting $\mathfrak{g}_{\sym} = \Span\{ \kappa_{i} \}_{i=1}^{k}$, the corresponding $\mathfrak{g} = \mathbb{J} \mathfrak{g}_{\sym} \subset \sp(2d,\R)$ gives the symmetry Lie algebra, i.e., the Hamiltonian $H$ satisfies $\mathfrak{g}$-symmetry~\eqref{eq:g-symmetry}.

\begin{example}
  \label{ex:NLS2}
  Consider again the NLS from \Cref{ex:NLS}.
  The Lie algebra $\mathfrak{g}$ here is
  \begin{equation*}
    \mathfrak{g} = \Span\left\{
      \varkappa_{0} \defeq 
      \begin{bmatrix}
        0 & -I_{d} \\
        I_{d} & 0
      \end{bmatrix}
    \right\},
  \end{equation*}
  and thus
  \begin{equation*}
    \mathfrak{g}_{\sym} = \Span\left\{
      \mathbb{J}^{T} \varkappa_{0} = -I_{2d}
    \right\}.
  \end{equation*}
  Therefore, setting $\kappa = 2I_{2d}$, the associated quadratic invariant is
  \begin{equation}
    \label{eq:Q-NLS}
    Q_{\kappa}(z) = \frac{1}{2} z^{T} \kappa z
    = \sum_{i=1}^{d} (q_{i}^{2} + p_{i}^{2}),
  \end{equation}
  which is essentially the ``total mass'' of the NLS~\cite{CoKeStTaTa2010}.
\end{example}

\section{Linear and Quadratic Invariants in Extended System}
\label{sec:invariants-ext}
Now we would like to address the following question:
If a Hamiltonian system~\eqref{eq:Ham} possesses linear and/or quadratic invariants, then does the corresponding extended system~\eqref{eq:Pihajoki} inherit such invariants?

We first discuss linear invariants in \Cref{ssec:linear_inv}, and then in \Cref{ssec:action-ext,ssec:quad_inv-ext}, we build on the previous section to discuss how linear action symmetries and quadratic invariants are inherited by the extended system~\eqref{eq:Pihajoki}.

\subsection{Linear Invariants of Extended System}
\label{ssec:linear_inv}
\begin{proposition}[Inheritance of linear invariants]
  \label{prop:inheritance-linear}
  The function
  \begin{equation}
    \label{eq:L}
    L_{a}(z) \defeq a^{T} z
    \quad\text{with}\quad
    a = (a_{q}, a_{p}) \in \R^{2d}
    \quad\text{and}\quad
    a_{q}, a_{p} \in \R^{d}
  \end{equation}
  is a linear invariant of the original Hamiltonian system~\eqref{eq:Ham} if and only if
  \begin{equation}
    \label{eq:hatL}
    \hat{L}_{a}(\zeta) \defeq \hat{a}^{T} \zeta
    \quad\text{with}\quad
    \zeta = (q, x, p, y) \in T^{*}\R^{2d} \cong \R^{4d}
    \quad\text{and}\quad
    \hat{a} = \frac{1}{2}(a_{q}, a_{q}, a_{p}, a_{p}) \in \R^{4d}
  \end{equation}
  is a linear invariant of the extended Hamiltonian system~\eqref{eq:Pihajoki}.
\end{proposition}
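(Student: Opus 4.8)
The plan is to reduce both sides of the equivalence to the vanishing of a single scalar function and then read off the two implications. First I would record the standard characterization: since $L_a$ is linear with $DL_a = a$, it is an invariant of \eqref{eq:Ham} exactly when its rate of change along the flow vanishes everywhere,
\begin{equation*}
  \od{}{t} L_a(z) = \{L_a, H\}(z) = a^{T}\mathbb{J}\,DH(z) = 0
  \qquad \forall z \in T^{*}\R^{d}.
\end{equation*}
I abbreviate this scalar function as $g(z) \defeq a^{T}\mathbb{J}\,DH(z)$, so that $L_a$ is a linear invariant of \eqref{eq:Ham} if and only if $g \equiv 0$.

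The key computation is to express $\hat{L}_a$ through the splitting variables $\eta = (q,y)$ and $\xi = (x,p)$. The particular choice $\hat{a} = \tfrac{1}{2}(a_{q}, a_{q}, a_{p}, a_{p})$ in \eqref{eq:hatL} is engineered precisely so that
\begin{equation*}
  \hat{L}_a(\zeta) = \tfrac{1}{2}\parentheses{ a^{T}\eta + a^{T}\xi } = \tfrac{1}{2}\parentheses{ L_a(\eta) + L_a(\xi) },
\end{equation*}
that is, $\hat{L}_a$ is the average of $L_a$ over the two phase-space copies carried by the extended system. Differentiating along the extended flow \eqref{eq:Pihajoki-xi_eta}, where $\dot{\eta} = \mathbb{J}\,DH(\xi)$ and $\dot{\xi} = \mathbb{J}\,DH(\eta)$, and using $D\hat{L}_a = \hat{a}$, I obtain the single identity
\begin{equation*}
  \od{}{t}\hat{L}_a(\zeta)
  = \tfrac{1}{2}\parentheses{ a^{T}\mathbb{J}\,DH(\xi) + a^{T}\mathbb{J}\,DH(\eta) }
  = \tfrac{1}{2}\parentheses{ g(\xi) + g(\eta) }.
\end{equation*}

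Both implications now fall out of this identity. For the forward implication ($L_a$ invariant $\Rightarrow \hat{L}_a$ invariant), I note that $g \equiv 0$ makes the right-hand side vanish for every $\zeta$, so $\hat{L}_a$ is conserved. For the converse ($\hat{L}_a$ invariant $\Rightarrow L_a$ invariant), invariance of $\hat{L}_a$ forces $g(\xi) + g(\eta) = 0$ for all $(\eta,\xi)$; since $\eta$ and $\xi$ are \emph{independent} coordinates on the extended phase space $T^{*}\R^{2d}$, I may restrict to the diagonal $\eta = \xi = z$ (equivalently, evaluate on $\mathcal{N}$), which gives $2\,g(z) = 0$ and hence $g \equiv 0$. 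I do not anticipate a genuine analytic obstacle here: the whole argument is a one-line application of the chain rule once the averaging decomposition of $\hat{L}_a$ is in place. The only point demanding care is this use of independence in the converse---it is exactly the freedom to vary $\eta$ and $\xi$ separately, which is unavailable on $\mathcal{N}$ alone, that lets me pass from the symmetrized relation $g(\xi)+g(\eta)=0$ back to $g\equiv 0$.
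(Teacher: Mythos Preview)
Your proof is correct and follows essentially the same route as the paper's: both reduce the question to the identity $\tfrac12\bigl(a^{T}\mathbb{J}\,DH(\eta)+a^{T}\mathbb{J}\,DH(\xi)\bigr)=0$ (the paper via the extended Poisson bracket, you via differentiating along the flow), obtain the forward implication trivially, and recover the converse by restricting to the diagonal $\eta=\xi$. Your packaging of $\hat{L}_a$ as the average $\tfrac12(L_a(\eta)+L_a(\xi))$ is a nice way to see why the computation works, but the underlying argument is the same.
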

\begin{proof}[Proof of \Cref{prop:inheritance-linear}]
  Notice that
  \begin{equation}
    \label{eq:R-symmetry}
    \begin{split}
      \text{\eqref{eq:L} is an invariant of \eqref{eq:Ham}}
      &\iff
      \{L_{a}, H\}(z) = 0\quad \forall z \in T^{*}\R^{d} \\
      &\iff a^{T} \mathbb{J}\, DH(z) = 0 \quad \forall z \in T^{*}\R^{d}.
      % \\
      % &\iff -a_{p}^{T}D_{1}H(z) + a_{q}^{T}D_{2}H(z) = 0 \quad \forall z \in T^{*}\R^{d},
    \end{split}
  \end{equation}
  
  On the other hand, for the extended system, let us define, using the Kronecker product $\otimes$,
  \begin{equation*}
    \hat{\mathbb{J}}
    \defeq
    \begin{bmatrix}
      0 & I_{2d} \\
      -I_{2d} & 0 
    \end{bmatrix}
    = 
    \begin{bmatrix}
      0 & I_{2} \otimes I_{d} \\
      -I_{2} \otimes I_{d} & 0
    \end{bmatrix}
  \end{equation*}
  and also the extended Poisson bracket
  \begin{equation*}
    \{\hat{F}, \hat{G}\}_{\text{ext}}(\zeta) \defeq \bigl( D\hat{F}(\zeta) \bigr)^{T} \hat{\mathbb{J}}\, D\hat{G}(\zeta).
  \end{equation*}
  Then we have
  \begin{equation}
    \label{eq:R-symmetry-ext}        
    \begin{split}
      &\text{\eqref{eq:hatL} is an invariant of \eqref{eq:Pihajoki}}
      \\
      &\iff
      \{\hat{L}_{a}, \hat{H}\}_{\text{ext}}(\zeta) = 0 \quad \forall \zeta \in T^{*}\R^{2d}
      \\
      &\iff
      \hat{a}^{T} \hat{\mathbb{J}}\,D\hat{H}(\zeta) = 0
      \quad \forall \zeta \in T^{*}\R^{2d}
      \\
      &\iff
      \frac{1}{2} \parentheses{
        a^{T} \mathbb{J}\, DH(q,y)
        + a^{T} \mathbb{J}\, DH(x,p)
      } = 0
      \quad \forall (q,y), (x,p) \in T^{*}\R^{d}.
    \end{split}
  \end{equation}
  Clearly \eqref{eq:R-symmetry} implies \eqref{eq:R-symmetry-ext}.
  On the other hand, \eqref{eq:R-symmetry-ext} for the particular case of $(x,p) = (q,y)$ gives
  \begin{equation*}
    a^{T} \mathbb{J}\, DH(q,y) = 0
    \quad \forall (q,y) \in T^{*}\R^{d},
  \end{equation*}
  which implies \eqref{eq:R-symmetry}.
  Hence the claimed equivalence follows.
\end{proof}

\begin{remark}
  The extended system of \citet{Ta2016b} (see \eqref{eq:Tao} in \Cref{sec:Tao}) enjoys the same property; see \Cref{ssec:inheritance-Tao}.
\end{remark}

\subsection{Actions on Extended Phase Space}
\label{ssec:action-ext}
Just as in the last section, let $\mathsf{G}$ be a subgroup of $\Sp(2d,\R)$, and consider the following action of $\mathsf{G}$ on the extended phase space $T^{*}\R^{2d} \cong \R^{4d}$:
\begin{equation}
  \label{eq:hatPsi}
  \begin{split}
    \hat{\Psi}&\colon \mathsf{G} \times T^{*}\R^{2d} \to T^{*}\R^{2d};
    \\\
    &\parentheses{
      S = \begin{bmatrix}
        \mathsf{A} & \mathsf{B} \\
        \mathsf{C} & \mathsf{D}
      \end{bmatrix},
      \zeta =
      \begin{bmatrix}
        q \\
        x \\
        p \\
        y
      \end{bmatrix}
    }
    \mapsto
    \begin{bmatrix}
      \mathsf{A} q + \mathsf{B} y \\
      \mathsf{A} x + \mathsf{B} p \\
      \mathsf{C} x + \mathsf{D} p \\
      \mathsf{C} q + \mathsf{D} y
    \end{bmatrix}
    = \hat{S} \zeta
    \eqdef \hat{\Psi}_{S}\parentheses{ \zeta },
  \end{split}
\end{equation}
where we defined, again using the Kronecker product $\otimes$,
\begin{equation*}
  \hat{S} \defeq
  \begin{bmatrix}
    \mathsf{A} & 0 & 0 & \mathsf{B} \\
    0 & \mathsf{A} & \mathsf{B} & 0 \\
    0 & \mathsf{C} & \mathsf{D} & 0 \\
    \mathsf{C} & 0 & 0 & \mathsf{D}
  \end{bmatrix}
  =
  \begin{bmatrix}
    I_{2} \otimes \mathsf{A} & P \otimes \mathsf{B} \\
    P \otimes \mathsf{C} & I_{2} \otimes \mathsf{D}
  \end{bmatrix}
  \quad
  \text{with}
  \quad
  P \defeq
  \begin{bmatrix}
    0 & 1 \\
    1 & 0
  \end{bmatrix}.
\end{equation*}
Then it is a straightforward computation to show that $\hat{S}^{T} \hat{\mathbb{J}} \hat{S} = \hat{\mathbb{J}}$, i.e., $\hat{S} \in \Sp(4d,\R)$.

Likewise, for any $\varkappa = \mathbb{J} \kappa \in \sp(2d,\R)$ (see \eqref{eq:kappas}), we may define
\begin{equation*}
  \hat{\varkappa} \defeq
  \frac{1}{2}
  \begin{bmatrix}
    I_{2} \otimes \kappa_{12}^{T} & P \otimes \kappa_{22} \\
    -P \otimes \kappa_{11} & -I_{2} \otimes \kappa_{12}
  \end{bmatrix}
  = \frac{1}{2}
  \begin{bmatrix}
    \kappa_{12}^{T} & 0 & 0 & \kappa_{22} \\
    0 & \kappa_{12}^{T} & \kappa_{22} & 0 \\
    0 & -\kappa_{11} & -\kappa_{12} & 0 \\
    -\kappa_{11} & 0 & 0 & -\kappa_{12}
  \end{bmatrix}
  \in \sp(4d,\R).
\end{equation*}
Then $\hat{\varkappa} = \hat{\mathbb{J}} \hat{\kappa}$ with
\begin{equation}
  \label{eq:hatkappa}
  \hat{\kappa}
  \defeq
   \frac{1}{2}
  \begin{bmatrix}
    P \otimes \kappa_{11} & I_{2} \otimes \kappa_{12} \\
    I_{2} \otimes \kappa_{12}^{T} & P \otimes \kappa_{22}
  \end{bmatrix}
  = \frac{1}{2}
  \begin{bmatrix}
    0 & \kappa_{11} & \kappa_{12} & 0 \\
    \kappa_{11} & 0 & 0 & \kappa_{12} \\
    \kappa_{12}^{T} & 0 & 0 & \kappa_{22} \\
    0 & \kappa_{12}^{T} & \kappa_{22} & 0
  \end{bmatrix}
  \in \sym(4d,\R).
\end{equation}
Accordingly, we may define the infinitesimal generator in the extended phase space as
\begin{equation}
  \label{eq:inf_gen-ext}
  \hat{\kappa}_{T^{*}\R^{2d}}(\zeta)
  = \hat{\varkappa} \zeta
  = \hat{\mathbb{J}} \hat{\kappa} \zeta
  = \frac{1}{2}
  \begin{bmatrix}
    \kappa_{12}^{T} q + \kappa_{22} y \\
    \kappa_{12}^{T} x + \kappa_{22} p \\
    -\kappa_{11} x - \kappa_{12} p \\
    -\kappa_{11} q - \kappa_{12} y
  \end{bmatrix}.
\end{equation}

\subsection{Symmetry and Quadratic Invariants of Extended System}
\label{ssec:quad_inv-ext}
Now we are ready to state how the extended Hamiltonian system~\eqref{eq:Pihajoki} inherits symmetry and quadratic invariants from the original one~\eqref{eq:Ham}:
\begin{proposition}[Inheritance of symmetry and quadratic invariants]
  \label{prop:inheritance}
  Let $H\colon T^{*}\R^{d} \to \R$ be a smooth Hamiltonian and $\hat{H}\colon T^{*}\R^{2d} \to \R$ be its associated extended Hamiltonian defined as in \eqref{eq:hatH}.
  Suppose that $\mathsf{G}$ is a matrix Lie subgroup of $\Sp(2d,\R)$, and let $\mathfrak{g} \subset \sp(2d,\R)$ be its Lie algebra, and $\mathfrak{g}_{\sym} \defeq \mathbb{J}^{T}\mathfrak{g} \subset \sym(2d,\R)$.
  \begin{enumerate}[\bf (i)]
  \item If $H$ has $\mathsf{G}$-symmetry in the sense of \eqref{eq:G-symmetry}, then $\hat{H}$ inherits $\mathsf{G}$-symmetry via the action $\hat{\Psi}$ defined in \eqref{eq:hatPsi}:
    \begin{equation*}
      \hat{H} \circ \hat{\Psi}_{S}(\zeta) = \hat{H}(\zeta)
      \quad
      \forall
      S \in \mathsf{G}
      \quad
      \forall \zeta \in T^{*}\R^{2d}.
    \end{equation*}
    As a result, for any $\kappa \in \mathfrak{g}_{\sym}$, the quadratic function
    \begin{equation}
      Q_{\kappa}(z) \defeq \frac{1}{2} z^{T} \kappa z
      = \frac{1}{2} q^{T} \kappa_{11} q + q^{T} \kappa_{12} p + \frac{1}{2} p^{T} \kappa_{22} p
      \tag{\ref{eq:Q}}
    \end{equation}
    is an invariant of the original Hamiltonian system~\eqref{eq:Ham}, and
    \begin{equation}
      \label{eq:hatQ}
      \hat{Q}_{\kappa}(\zeta) \defeq \frac{1}{2} \zeta^{T} \hat{\kappa} \zeta
      = \frac{1}{2} \parentheses{ q^{T} \kappa_{11} x + q^{T} \kappa_{12} p + x^{T} \kappa_{12} y + y^{T} \kappa_{22} p },
    \end{equation}
    is an invariant of the extended Hamiltonian system~\eqref{eq:Pihajoki}.
    \label{prop:inheritance-i}
    \smallskip
  \item For any $\kappa \in \sym(2d,\R)$, \eqref{eq:Q} is a quadratic invariant of the original Hamiltonian system~\eqref{eq:Ham} if and only if \eqref{eq:hatQ} is a quadratic invariant of the extended Hamiltonian system~\eqref{eq:Pihajoki}.
    \label{prop:inheritance-ii}
  \end{enumerate}
\end{proposition}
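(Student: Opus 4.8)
The plan is to treat the two parts separately while exploiting a single structural observation in both: the extended action $\hat{\Psi}_S$ decouples on the pair $(\eta,\xi)=((q,y),(x,p))$ into two independent copies of the original action.

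For part~(i), I would first read off from \eqref{eq:hatPsi} that under $\hat{\Psi}_S$ the images of $\eta=(q,y)$ and $\xi=(x,p)$ are exactly $S\eta=(\mathsf{A}q+\mathsf{B}y,\mathsf{C}q+\mathsf{D}y)$ and $S\xi=(\mathsf{A}x+\mathsf{B}p,\mathsf{C}x+\mathsf{D}p)$. Since $\hat{H}(\zeta)=H(\eta)+H(\xi)$, the $\mathsf{G}$-symmetry~\eqref{eq:G-symmetry} of $H$ then gives $\hat{H}(\hat{\Psi}_S(\zeta))=H(S\eta)+H(S\xi)=H(\eta)+H(\xi)=\hat{H}(\zeta)$ at once. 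To convert this group symmetry into the conserved quadratic $\hat{Q}_\kappa$, I would mirror the computation of \Cref{ssec:quad_inv}: since $\hat{\kappa}$ in \eqref{eq:hatkappa} is symmetric, $D\hat{Q}_\kappa(\zeta)=\hat{\kappa}\zeta$, whence $\hat{\mathbb{J}}\,D\hat{Q}_\kappa(\zeta)=\hat{\varkappa}\zeta=\hat{\kappa}_{T^*\R^{2d}}(\zeta)$; differentiating $\hat{H}(\exp(s\hat{\varkappa})\zeta)=\hat{H}(\zeta)$ at $s=0$ yields the infinitesimal invariance $\hat{\kappa}_{T^*\R^{2d}}(\zeta)^T D\hat{H}(\zeta)=0$, and the same Poisson-bracket manipulation used for $Q_\kappa$ in \Cref{ssec:quad_inv} converts this into $\{\hat{Q}_\kappa,\hat{H}\}_{\text{ext}}=0$. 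That $Q_\kappa$ is itself an invariant of \eqref{eq:Ham} is precisely the review in \Cref{ssec:quad_inv}.

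For part~(ii), I would drop the group language and argue directly with Poisson brackets, paralleling the proof of \Cref{prop:inheritance-linear}. Writing $g(z)\defeq\kappa_{T^*\R^d}(z)^T DH(z)$, so that $Q_\kappa$ is an invariant of \eqref{eq:Ham} if and only if $g\equiv0$, the core computation is to expand $\hat{\kappa}_{T^*\R^{2d}}(\zeta)^T D\hat{H}(\zeta)$ using the explicit generator~\eqref{eq:inf_gen-ext} together with $D\hat{H}(\zeta)=\bigl(D_1H(q,y),D_1H(x,p),D_2H(x,p),D_2H(q,y)\bigr)$. The index bookkeeping should collapse this contraction into $\tfrac12\bigl(g(q,y)+g(x,p)\bigr)$, the terms carrying $(q,y)$ reassembling into $g$ at $(q,y)$ and those carrying $(x,p)$ into $g$ at $(x,p)$. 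Given this identity, the forward implication is immediate, since $g\equiv0$ forces the sum to vanish, while the converse follows from the diagonal-restriction trick already used for linear invariants: setting $(x,p)=(q,y)$ in $\tfrac12\bigl(g(q,y)+g(x,p)\bigr)=0$ recovers $g\equiv0$.

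The main obstacle is precisely this decoupling computation in part~(ii): verifying that $\hat{\kappa}_{T^*\R^{2d}}(\zeta)^T D\hat{H}(\zeta)$ really separates into $g(q,y)+g(x,p)$ with no cross terms coupling the two copies. This is where the off-diagonal placement of the blocks $\kappa_{11},\kappa_{12},\kappa_{22}$ in $\hat{\kappa}$---the $P\otimes(\cdot)$ versus $I_2\otimes(\cdot)$ pattern of \eqref{eq:hatkappa}---is essential, since it is what makes the extended generator pair each argument of $\hat{H}$ with the matching partial derivative rather than mixing the $(q,y)$ and $(x,p)$ slots. Once that separation is confirmed, both directions of the equivalence are routine, and part~(i) is essentially a corollary of the same bracket identity.
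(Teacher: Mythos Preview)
Your proposal is correct and matches the paper's proof essentially line for line: the decoupling $\hat{H}(\hat{\Psi}_S(\zeta))=H(S\eta)+H(S\xi)$ in part~(i), the identity $\hat{\kappa}_{T^*\R^{2d}}(\zeta)^T D\hat{H}(\zeta)=\tfrac12\bigl(g(q,y)+g(x,p)\bigr)$ in part~(ii), and the specialization argument for the converse are all exactly what the paper does. The only cosmetic difference is that the paper specializes by setting $(x,p)=0$ (using that $\kappa_{T^*\R^d}(0)=0$) rather than your diagonal choice $(x,p)=(q,y)$; both work equally well.
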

\begin{remark}
  Every quadratic function on $T^{*}\R^{d}$ may be written as in \eqref{eq:Q} for an appropriate $\kappa \in \sym(2d,\R)$, whereas not every quadratic function on $T^{*}\R^{2d}$ may be written as in \eqref{eq:hatQ}.
\end{remark}
\begin{remark}
  The extended system of \citet{Ta2016b} (see \eqref{eq:Tao}) does \textit{not} inherit quadratic invariants in general; see \Cref{ssec:inheritance-Tao}.
\end{remark}
\begin{proof}[Proof of \Cref{prop:inheritance}]
  \leavevmode
  \begin{enumerate}[\bf (i)]
  \item The $\mathsf{G}$-symmetry of $\hat{H}$ follows from a straightforward computation:
    For any $S =
    \begin{bmatrix}
      \mathsf{A} & \mathsf{B} \\
      \mathsf{C} & \mathsf{D}
    \end{bmatrix}
    \in \mathsf{G}$ and any $\zeta = (q, x, p, y) \in T^{*}\R^{2d}$,
    \begin{align*}
      \hat{H} \circ \hat{\Psi}_{S}(q, x, p, y)
      &= H( \mathsf{A} q + \mathsf{B} y, \mathsf{C} q + \mathsf{D} y )
        + H( \mathsf{A} x + \mathsf{B} p,  \mathsf{C} x + \mathsf{D} p ) \\
      &= H\parentheses{ S
      \begin{bmatrix}
        q \\
        y
      \end{bmatrix}
      }
      + H\parentheses{ S
        \begin{bmatrix}
          x \\
          p
        \end{bmatrix}
      } \\
      &= H(q, y) + H(x, p) \\
      &= \hat{H}(q, x, p, y),
    \end{align*}
    where the third equality follows from \eqref{eq:G-symmetry}.

    As discussed in \Cref{ssec:quad_inv}, the $\mathsf{G}$-symmetry of $H$ implies its $\mathfrak{g}$-symmetry, and it in turn implies that \eqref{eq:Q} is an invariant of the original Hamiltonian system~\eqref{eq:Ham}.
    Similarly, the $\mathsf{G}$-symmetry of $\hat{H}$ implies the following $\mathfrak{g}$-symmetry of $\hat{H}$:
    \begin{equation}
      \label{eq:g-symmetry-ext}
      \hat{\kappa}_{T^{*}\R^{2d}}(\zeta)^{T} D\hat{H}(\zeta) = 0
      \quad
      \forall
      \kappa \in \mathfrak{g}_{\sym}
      \quad
      \forall \zeta \in T^{*}\R^{2d}.
    \end{equation}
    However, in view of \eqref{eq:inf_gen-ext} and \eqref{eq:hatQ}, we have
    \begin{equation*}
      \hat{\kappa}_{T^{*}\R^{2d}}(\zeta) = \hat{\mathbb{J}}\,D\hat{Q}_{\hat{\kappa}}(\zeta)
      \quad
      \forall
      \zeta \in T^{*}\R^{2d},
    \end{equation*}
    and so we have, for any $\zeta \in T^{*}\R^{2d}$,
    \begin{align*}
      \{\hat{Q}_{\kappa}, \hat{H}\}_{\text{ext}}(\zeta)
      &= D\hat{Q}_{\kappa}(\zeta)^{T} \hat{\mathbb{J}}\, D\hat{H}(\zeta) \\
      &= -\hat{\kappa}_{T^{*}\R^{2d}}(\zeta)^{T} D\hat{H}(\zeta) \\
      &= 0.
    \end{align*}
    Hence $\hat{Q}_{\kappa}$ is an invariant of the extended system~\eqref{eq:Pihajoki}.
  \item Let $\kappa \in \sym(2d,\R)$ be arbitrary.
    Recall from \Cref{ssec:quad_inv} that
    \begin{equation}
      \label{eq:g-symmetry2}
      \begin{split}
        \text{\eqref{eq:Q} is an invariant of \eqref{eq:Ham}}
        &\iff
          \{Q_{\kappa}, H\}(z) = 0\quad \forall z \in T^{*}\R^{d}
          \\
        &\iff \kappa_{T^{*}\R^{d}}(z)^{T} DH(z) = 0 \quad \forall z \in T^{*}\R^{d}.
      \end{split}
    \end{equation}
    On the other hand, using \eqref{eq:inf_gen-ext}, we also have
    \begin{equation}
      \label{eq:g-symmetry-ext2}
      \begin{split}
        \text{\eqref{eq:hatQ} is an invariant of \eqref{eq:Pihajoki}}
        &\iff
        \{\hat{Q}_{\kappa}, \hat{H}\}_{\text{ext}}(\zeta) = 0 \quad \forall \zeta \in T^{*}\R^{2d}
        \\
        &\iff
        \hat{\kappa}_{T^{*}\R^{2d}}(\zeta)^{T} D\hat{H}(\zeta) = 0
        \quad \forall \zeta \in T^{*}\R^{2d}.
      \end{split}
    \end{equation}
    However, notice that we have the following equality:
    \begin{equation}
      \label{eq:g-symmetry2-equiv}
    \begin{split}
      \hat{\kappa}_{T^{*}\R^{2d}}(\zeta)^{T} D\hat{H}(\zeta)
      &= \frac{1}{2}
        \begin{bmatrix}
          \kappa_{12}^{T} q + \kappa_{22} y \\
          \kappa_{12}^{T} x + \kappa_{22} p \\
          -\kappa_{11} x - \kappa_{12} p \\
          -\kappa_{11} q - \kappa_{12} y
        \end{bmatrix}
      \cdot
      \begin{bmatrix}
        D_{1}H(q,y) \\
        D_{1}H(x,p) \\
        D_{2}H(x,p) \\
        D_{2}H(q,y)
      \end{bmatrix}
      \\
      &= \frac{1}{2} \kappa_{T^{*}\R^{d}}(q,y)^{T} DH(q,y)
        + \frac{1}{2} \kappa_{T^{*}\R^{d}}(x,p)^{T} DH(x,p).
    \end{split}
    \end{equation}
    Suppose that the left-hand side vanishes for any $\zeta \in T^{*}\R^{2d}$.
    Then, setting $(x,p) = 0$ yields
    \begin{equation*}
      \kappa_{T^{*}\R^{d}}(q,y)^{T} DH(q,y) = 0
      \quad \forall (q,y) \in T^{*}\R^{d},
    \end{equation*}
    which is clearly equivalent to \eqref{eq:g-symmetry2}.
    Conversely, if \eqref{eq:g-symmetry2} holds then it clearly implies \eqref{eq:g-symmetry-ext2} in view of \eqref{eq:g-symmetry2-equiv}.
    Therefore, \eqref{eq:g-symmetry2} and \eqref{eq:g-symmetry-ext2} are equivalent.
  \end{enumerate}
\end{proof}

\begin{example}[Quadratic invariant of extended NLS system]
  As we have seen in \Cref{ex:NLS2}, the NLS possesses the quadratic invariant $Q_{\kappa}$ shown in \eqref{eq:Q-NLS} with $\kappa = 2I_{2d}$.
  Hence we have
  \begin{equation*}
    \hat{\kappa} = 
    \begin{bmatrix}
      0 & I_{d} & 0 & 0 \\
      I_{d} & 0 & 0 & 0 \\
      0 & 0 & 0 & I_{d} \\
      0 & 0 & I_{d} & 0
    \end{bmatrix},
  \end{equation*}
  and so the corresponding extended system possesses the quadratic invariant
  \begin{equation*}
    \hat{Q}_{\kappa}(\zeta)
    = \frac{1}{2} \zeta^{T} \hat{\kappa} \zeta
    = q^{T} x + y^{T} p
    = \eta^{T} \xi.
  \end{equation*}
  Notice that, while the original invariant $Q_{\kappa}(z) = \sum_{i=1}^{d} (q_{i}^{2} + p_{i}^{2})$ had no ``mixed term'' like $q^{T}p$, the invariant $\hat{Q}_{\kappa}(\zeta)$ for the extended system consists only of the mixed term $\eta^{T} \xi$.
  We shall see in the next section that this generalizes to any quadratic invariant of \eqref{eq:Ham} and is one of the key observations towards the proof of our main result.
\end{example}

\section{Conservation Laws in Extended Phase Space Integrators}
\label{sec:proof}
\subsection{Pihajoki's Integrator}
\label{ssec:Pihajoki}
Recall from \Cref{sec:extended_integrators} that, writing $\eta = (q, y)$ and $\xi = (x, p)$, we may write the extended system~\eqref{eq:Pihajoki} as follows:
\begin{equation}
  \dot{\eta} = \mathbb{J}\,DH(\xi),
  \qquad
  \dot{\xi} = \mathbb{J}\,DH(\eta),
  \tag{\ref{eq:Pihajoki-xi_eta}}
\end{equation}
and that Pihajoki's integrator~\eqref{eq:Pihajoki-Strang} is the extended-phase-space analogue of the St\"ormer--Verlet method.
This implies the following lemma on the invariants inherited by Pihajoki's integrator:

\begin{lemma}[Linear and quadratic invariants of Pihajoki's integrator]
  \label{lem:Pihajoki}
  Let $\dt > 0$ and $\hat{\zeta_{0}} \in T^{*}\R^{2d}$ be arbitrary and set $\hat{\zeta_{1}} \defeq \hat{\Phi}_{\dt}(\hat{\zeta}_{0})$, where $\hat{\Phi}$ is Pihajoki's integrator~\eqref{eq:Pihajoki-Strang}.
  Then:
  \begin{enumerate}[\bf (i)]
  \item If the Hamiltonian system~\eqref{eq:Ham} possesses a linear invariant of the form~\eqref{eq:L} in $T^{*}\R^{d}$ with $a \in \R^{2d}$, then Pihajoki's extended phase space integrator~\eqref{eq:Pihajoki-Strang} preserves the linear invariant of the form~\eqref{eq:hatL} in $T^{*}\R^{2d}$, i.e.,
    \begin{equation*}
      \hat{L}_{a}(\hat{\zeta}_{0}) = \hat{L}_{a}(\hat{\zeta}_{1}).
    \end{equation*}
  \item If the Hamiltonian system~\eqref{eq:Ham} possesses a quadratic invariant of the form~\eqref{eq:Q} in $T^{*}\R^{d}$ with $\kappa \in \sym(2d,\R)$, then Pihajoki's extended phase space integrator~\eqref{eq:Pihajoki-Strang} preserves a quadratic invariant of the form~\eqref{eq:hatQ} in $T^{*}\R^{2d}$, i.e.,
    \begin{equation*}
      \hat{Q}_{\kappa}(\hat{\zeta}_{0}) = \hat{Q}_{\kappa}(\hat{\zeta}_{1}).
    \end{equation*}
  \end{enumerate}
\end{lemma}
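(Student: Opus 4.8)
The plan is to work in the partitioned variables $\eta = (q,y)$ and $\xi = (x,p)$ and to show that \emph{each} of the two shear flows $\hat{\Phi}^{A}$ and $\hat{\Phi}^{B}$ in \eqref{eq:PhiAB} preserves the relevant invariant exactly; since Pihajoki's integrator \eqref{eq:Pihajoki-Strang} is a composition of these flows, the conclusion then follows at once. The crucial preliminary step is to rewrite \eqref{eq:hatL} and \eqref{eq:hatQ} as purely ``cross'' expressions in the $(\eta,\xi)$-splitting:
\begin{equation*}
  \hat{L}_{a}(\zeta) = \frac{1}{2}\parentheses{ a^{T}\eta + a^{T}\xi }, \qquad \hat{Q}_{\kappa}(\zeta) = \frac{1}{2}\,\eta^{T} \kappa\, \xi, \qquad a = (a_{q},a_{p}).
\end{equation*}
In particular $\hat{Q}_{\kappa}$ carries no $\eta$-only or $\xi$-only quadratic block, and this is exactly the feature that will make the shear steps conserve it \emph{exactly} rather than merely to second order.

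For the linear case, I would apply $\hat{\Phi}^{A}_{t}$, which fixes $\eta$ and sends $\xi \mapsto \xi + t\,\mathbb{J}\,DH(\eta)$; the resulting increment of $\hat{L}_{a}$ is $\tfrac{t}{2}\,a^{T}\mathbb{J}\,DH(\eta)$, which vanishes because $a^{T}\mathbb{J}\,DH(z)=0$ for all $z$ is precisely the linear-invariance condition \eqref{eq:R-symmetry}. The flow $\hat{\Phi}^{B}_{t}$ is handled symmetrically, fixing $\xi$ and shifting $\eta$, which yields the increment $\tfrac{t}{2}\,a^{T}\mathbb{J}\,DH(\xi)=0$. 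Hence both factors preserve $\hat{L}_{a}$ for every $t$, and so does their composition.

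For the quadratic case I would first restate the invariance condition \eqref{eq:g-symmetry2} in the convenient form $z^{T}\kappa\,\mathbb{J}\,DH(z)=0$ for all $z$, using $\kappa^{T}=\kappa$, $\mathbb{J}^{T}=-\mathbb{J}$, and $\kappa_{T^{*}\R^{d}}(z)=\mathbb{J}\kappa z$ from \eqref{eq:inf_gen}. Under $\hat{\Phi}^{A}_{t}$ (with $\eta$ fixed), the bilinear form gives $\hat{Q}_{\kappa} \mapsto \tfrac{1}{2}\eta^{T}\kappa\,\xi + \tfrac{t}{2}\,\eta^{T}\kappa\,\mathbb{J}\,DH(\eta)$; the increment is \emph{linear} in $t$---there is no $t^{2}$ term precisely because $\eta$ does not move and $\hat{Q}_{\kappa}$ has no $\xi^{T}(\cdot)\xi$ block---and its coefficient $\eta^{T}\kappa\,\mathbb{J}\,DH(\eta)$ vanishes by the invariance condition at $z=\eta$. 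Again $\hat{\Phi}^{B}_{t}$ is symmetric: fixing $\xi$ and shifting $\eta$ produces the increment $\tfrac{t}{2}\,(\mathbb{J}\,DH(\xi))^{T}\kappa\,\xi$, which equals $\tfrac{t}{2}\,\xi^{T}\kappa\,\mathbb{J}\,DH(\xi)$ after transposing the scalar and using $\kappa^{T}=\kappa$, hence vanishes by the same condition at $z=\xi$.

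The computation itself is short; the real content, and the step I would flag as the crux, is the structural observation that every quadratic invariant of the original system lifts to a \emph{purely bilinear} invariant $\tfrac{1}{2}\eta^{T}\kappa\,\xi$ of the extended system. This is what removes the second-order shear error that would otherwise obstruct exact conservation of a generic quadratic by a St\"ormer--Verlet-type splitting, and it is the concrete mechanism behind the heuristic that such a method conserves exactly those invariants that are ``mixed'' with respect to the splitting. An alternative route is to invoke directly the known conservation theorem for the Lobatto IIIA--IIIB (St\"ormer--Verlet) pair, in which case the same observation is precisely what verifies the hypothesis that $\hat{Q}_{\kappa}$ has the required mixed form.
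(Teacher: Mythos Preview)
Your proposal is correct and rests on the same structural observation as the paper: once you rewrite $\hat{Q}_{\kappa}(\zeta) = \tfrac{1}{2}\eta^{T}\kappa\,\xi$ as a purely bilinear form in the $(\eta,\xi)$-splitting, exact preservation by the St\"ormer--Verlet-type composition follows. The only difference is presentational. The paper, having established the bilinear form, simply invokes the standard conservation theorems for the Lobatto IIIA--IIIB pair (linear invariants in general, and quadratic invariants of the mixed form $\eta^{T}M\xi$) and cites \cite{HaLuWa2006}; you instead verify by hand that each shear factor $\hat{\Phi}^{A}_{t}$ and $\hat{\Phi}^{B}_{t}$ preserves $\hat{L}_{a}$ and $\hat{Q}_{\kappa}$, which is precisely the elementary proof of those cited theorems specialized to this setting. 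Your direct route is self-contained and makes explicit \emph{why} the absence of $\eta$-only and $\xi$-only blocks kills the shear increment; the paper's route is shorter but relies on the reader knowing the partitioned Runge--Kutta conservation results. You yourself note the citation route as an alternative, so there is no substantive divergence.
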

\begin{remark}
  \label{rem:Pihajoki}
  Unfortunately, this lemma does \textit{not} imply that Pihajoki's integrator~\eqref{eq:Pihajoki-Strang} preserves linear and quadratic invariant of the \textit{original} Hamiltonian system~\eqref{eq:Ham} in $T^{*}\R^{d}$.
  In other words, it is a conservation law that holds only in the \textit{extended} phase space $T^{*}\R^{2d}$.
  We shall discuss this issue in \Cref{ssec:discussion} below.
\end{remark}
\begin{proof}[Proof of \Cref{lem:Pihajoki}]
  \leavevmode
  \begin{enumerate}[\bf (i)]
  \item By the assumption and \Cref{prop:inheritance-linear}, the linear function \eqref{eq:hatL} is an invariant of the extended Hamiltonian system~\eqref{eq:Pihajoki}.
    The St\"ormer--Verlet-type splitting~\eqref{eq:Pihajoki-Strang} is the partitioned Runge--Kutta method with the 2-stage Lobatto IIIA--IIIB pair applied to \eqref{eq:Pihajoki-xi_eta}, and is known to preserve any linear invariant of the system; see~\citet[Section~II.2.1 and Theorems~IV.1.5]{HaLuWa2006}.
  \item By the assumption and \Cref{prop:inheritance}~\eqref{prop:inheritance-ii}, the quadratic function \eqref{eq:hatQ} is an invariant of the extended Hamiltonian system~\eqref{eq:Pihajoki}.
    Now, notice that we may rewrite \eqref{eq:hatQ} as follows:
    \begin{align*}
      \hat{Q}_{\kappa}(\zeta)
      &= \frac{1}{2} \parentheses{ q^{T} \kappa_{11} x
        + q^{T} \kappa_{12} p
        + y^{T} \kappa_{12}^{T} x
        + y^{T} \kappa_{22} p
        }
      \\
    &= \frac{1}{2} \begin{bmatrix}
        q^{T} & y^{T}
      \end{bmatrix}
                \begin{bmatrix}
                  \kappa_{11} & \kappa_{12} \\
                  \kappa_{12}^{T} & \kappa_{22}
                \end{bmatrix}
                                    \begin{bmatrix}
                                      x \\
                                      p
                                    \end{bmatrix}
      \\
      &= \frac{1}{2} \eta^{T} \kappa \xi.
    \end{align*}
    
    It is well known that the St\"ormer--Verlet-type splitting~\eqref{eq:Pihajoki-Strang} for systems of the form~\eqref{eq:Pihajoki-xi_eta} preserves quadratic invariants of the form $\eta^{T} M \xi$ with $M \in \R^{2d \times 2d}$; see, e.g., \cite[Section~II.2 and Theorems~IV.2.3]{HaLuWa2006}.
  \end{enumerate}
\end{proof}

\subsection{Semiexplicit Integrator: Proof of \Cref{thm:main}}
\label{ssec:proof}
Let $z_{0} = (q_{0}, p_{0}) \in T^{*}\R^{d}$ be arbitrary and $\dt > 0$ be chosen such that $z_{1} \defeq \Phi_{\dt}(z_{0})$ is defined, where $\Phi$ is the discrete flow of the semiexplicit integrator defined in \Cref{ssec:semiexplicit}.

Before getting into the details of the proof, let us recall from \Cref{ssec:semiexplicit} how the semiexplicit method works.
Given $z_{0} = (q_{0}, p_{0})$, set $\zeta_{0} = (q_{0}, q_{0}, p_{0}, p_{0}) \in \mathcal{N}$ (i.e., $(x_{0},y_{0}) = (q_{0},p_{0})$), and the symmetric projection determines $\mu \in \R^{2d}$ so that
\begin{equation}
  \label{eq:zeta_1}
  \zeta_{1} \defeq \hat{\zeta}_{1} + A^{T} \mu \in \mathcal{N} = \ker A,
\end{equation}
where
\begin{equation}
  \label{eq:zeta_0}
  \hat{\zeta}_{1} \defeq \hat{\Phi}_{\dt}(\hat{\zeta}_{0})
  \quad\text{with}\quad
  \hat{\zeta}_{0} \defeq \zeta_{0} + A^{T} \mu
\end{equation}
using Pihajoki's integrator $\hat{\Phi}$ from \eqref{eq:Pihajoki-Strang}.
This means that $\zeta_{1} = (q_{1}, x_{1}, p_{1}, y_{1})$ satisfies $(x_{1}, y_{1}) = (q_{1}, p_{1})$, and thus one sets $z_{1} = (q_{1}, p_{1}) = \Phi_{\dt}(z_{0})$.
We also note that one can write $\mu$ in terms of $\hat{\zeta}_{0}$ or $\hat{\zeta}_{1}$: Since $\zeta_{0}, \zeta_{1} \in \ker A$ and $A A^{T} = 2 I_{2d}$ (see \eqref{eq:A}), we see from \eqref{eq:zeta_1} and \eqref{eq:zeta_0} that
\begin{equation}
  \label{eq:mu-zeta}
  \mu = -\frac{1}{2} A \hat{\zeta_{1}}
  = \frac{1}{2} A \hat{\zeta_{0}}.
\end{equation}

Suppose that the original Hamiltonian system~\eqref{eq:Ham} possesses a linear invariant of the form
\begin{equation*}
  L_{a}(z) \defeq a^{T} z
\end{equation*}
with $a \in \R^{2d}$ as well as a quadratic invariant of the form
\begin{equation}
  Q_{\kappa}(z) \defeq \frac{1}{2} z^{T} \kappa z
  = \frac{1}{2} q^{T} \kappa_{11} q + q^{T} \kappa_{12} p + \frac{1}{2} p^{T} \kappa_{22} p
  \tag{\ref{eq:Q}}
\end{equation}
with $\kappa \in \sym(2d,\R)$.
We would like to prove that $L_{a}(z_{0}) = L_{a}(z_{1})$ and $Q_{\kappa}(z_{0}) = Q_{\kappa}(z_{1})$.
It suffices to show that
\begin{equation*}
  L_{a}(z_{1}) - L_{a}(z_{0}) = \hat{L}_{a}(\hat{\zeta}_{1}) - \hat{L}_{a}(\hat{\zeta}_{0})
  \quad\text{and}\quad
  Q_{\kappa}(z_{1}) - Q_{\kappa}(z_{0}) = \hat{Q}_{\kappa}(\hat{\zeta}_{1}) - \hat{Q}_{\kappa}(\hat{\zeta}_{0}),
\end{equation*}
because \Cref{lem:Pihajoki} says that the right-hand side of each of these equations vanishes.

\subsubsection{Linear case}
First observe that
\begin{equation*}
  L_{a}(z_{1}) - L_{a}(z_{0})
  = a^{T} (z_{1} - z_{0})
  = \hat{a}^{T} (\zeta_{1} - \zeta_{0})
\end{equation*}
using the definition~\eqref{eq:hatL} of $\hat{a}$ and also $(x_{i}, y_{i}) = (q_{i}, p_{i})$ for $\zeta_{i} = (q_{i}, x_{i}, p_{i}, y_{i})$ for $i = 0, 1$.

On the other hand,
\begin{align*}
  \hat{L}_{a}(\hat{\zeta}_{1}) - \hat{L}_{a}(\hat{\zeta}_{0})
  &= \hat{a}^{T} (\hat{\zeta}_{1} - \hat{\zeta}_{0}) \\
  &= \hat{a}^{T} (\zeta_{1} - \zeta_{0}) - 2 \hat{a}^{T} A^{T} \mu \\
%  &= \hat{a}^{T} (\zeta_{1} - \zeta_{0}) + \hat{a}^{T} A^{T} A(\zeta_{1} - \zeta_{0}) \\
%  &= \hat{a}^{T} (\zeta_{1} - \zeta_{0}) + \parentheses{ A^{T} A \hat{a} }^{T} (\zeta_{1} - \zeta_{0}) \\
  &= \hat{a}^{T} (\zeta_{1} - \zeta_{0}),
\end{align*}
where we used \eqref{eq:zeta_1} and \eqref{eq:zeta_0} for the second equality; the last equality follows because $\hat{a} \in \ker A$; see \eqref{eq:A} and \eqref{eq:hatL}.
Hence we have
\begin{equation*}
  L_{a}(z_{1}) - L_{a}(z_{0}) = \hat{L}_{a}(\hat{\zeta}_{1}) - \hat{L}_{a}(\hat{\zeta}_{0}).
\end{equation*}

\subsubsection{Quadratic case}
The key observation is the following: Defining
\begin{equation*}
  \bar{\kappa} \defeq \frac{1}{2}
  \begin{bmatrix}
    I_{2} \otimes \kappa_{11} & P \otimes \kappa_{12} \\
    P \otimes \kappa_{12}^{T} & I_{2} \otimes \kappa_{22}
  \end{bmatrix} \\
  = \frac{1}{2}
  \begin{bmatrix}
    \kappa_{11} & 0 & 0 & \kappa_{12} \\
    0 & \kappa_{11} & \kappa_{12} & 0 \\
    0 & \kappa_{12}^{T} & \kappa_{22} & 0 \\
    \kappa_{12}^{T} & 0 & 0 & \kappa_{22}
  \end{bmatrix}
  \in \sym(4d,\R)
\end{equation*}
and 
\begin{equation*}
  \bar{Q}_{\kappa}(\zeta) \defeq \frac{1}{2} \zeta^{T} \bar{\kappa} \zeta
  = \frac{1}{4} \parentheses{ q^{T} \kappa_{11} q + x^{T} \kappa_{11} x }
  + \frac{1}{2} \parentheses{ q^{T} \kappa_{12} y + x^{T} \kappa_{12} p }
  + \frac{1}{4} \parentheses{ p^{T} \kappa_{22} p + y^{T} \kappa_{22} y },
\end{equation*}
we have
\begin{equation*}
  Q_{\kappa}(z_{i}) = \bar{Q}_{\kappa}(\zeta_{i})
  \quad\text{for}\quad
  i = 0, 1
\end{equation*}
because $\zeta_{i} = (q_{i}, x_{i}, p_{i}, y_{i}) \in \mathcal{N}$, i.e., $(x_{i}, y_{i}) = (q_{i}, p_{i})$ for $i = 0, 1$.
Hence it suffices to show that
\begin{equation*}
  \bar{Q}_{\kappa}(\zeta_{1}) - \bar{Q}_{\kappa}(\zeta_{0}) = \hat{Q}_{\kappa}(\hat{\zeta}_{1}) - \hat{Q}_{\kappa}(\hat{\zeta}_{0}). 
\end{equation*}

To that end, observe that, using \eqref{eq:zeta_1} and \eqref{eq:zeta_0}, 
\begin{align*}
  \bar{Q}_{\kappa}(\zeta_{1}) - \bar{Q}_{\kappa}(\zeta_{0})
  &= \bar{Q}_{\kappa}\parentheses{ \hat{\zeta}_{1} + A^{T} \mu } - \bar{Q}_{\kappa}\parentheses{ \hat{\zeta}_{0} - A^{T} \mu } \\
  &= \frac{1}{2} \hat{\zeta}_{1}^{T} \bar{\kappa} \hat{\zeta}_{1} + \hat{\zeta}_{1}^{T} \bar{\kappa} A^{T} \mu + \frac{1}{2} \mu^{T} A \bar{\kappa} A^{T} \mu \\
  &\quad- \frac{1}{2} \hat{\zeta}_{0}^{T} \bar{\kappa} \hat{\zeta}_{0} + \hat{\zeta}_{0}^{T} \bar{\kappa} A^{T} \mu - \frac{1}{2} \mu^{T} A \bar{\kappa} A^{T} \mu \\
  &= \frac{1}{2} \hat{\zeta}_{1}^{T} \bar{\kappa} \hat{\zeta}_{1} + \hat{\zeta}_{1}^{T} \bar{\kappa} A^{T} \mu
  - \frac{1}{2} \hat{\zeta}_{0}^{T} \bar{\kappa} \hat{\zeta}_{0} + \hat{\zeta}_{0}^{T} \bar{\kappa} A^{T} \mu \\
  &= \frac{1}{2} \hat{\zeta}_{1}^{T} \bar{\kappa} \parentheses{I_{4d}  - A^{T} A} \hat{\zeta}_{1}
  - \frac{1}{2} \hat{\zeta}_{0}^{T} \bar{\kappa} \parentheses{I_{4d}  - A^{T} A} \hat{\zeta}_{0},
\end{align*}
where we used \eqref{eq:mu-zeta} for the last equality.

Now, using the definition~\eqref{eq:A} of $A$, we see
\begin{equation*}
  I_{4d} - A^{T} A =
  I_{2d} -
  \begin{bmatrix}
    I_{d} & -I_{d} & 0 & 0 \\
    -I_{d} & I_{d} & 0 & 0 \\
    0 & 0 & I_{d} & -I_{d} \\
    0 & 0 & -I_{d} & I_{d}
  \end{bmatrix}
  =
  \begin{bmatrix}
    P \otimes I_{d} & 0 \\
    0 & P \otimes I_{d}
  \end{bmatrix},
\end{equation*}
and so, noting that $P^{2} = I_{2}$,
\begin{align*}
  \bar{\kappa}\parentheses{ I_{4d} - A^{T} A }
  &= \frac{1}{2}
    \begin{bmatrix}
      I_{2} \otimes \kappa_{11} & P \otimes \kappa_{12} \\
      P \otimes \kappa_{12}^{T} & I_{2} \otimes \kappa_{22}
    \end{bmatrix}
                                      \begin{bmatrix}
                                        P \otimes I_{d} & 0 \\
                                        0 & P \otimes I_{d}
                                      \end{bmatrix}
  \\
  &= \frac{1}{2}
    \begin{bmatrix}
      P \otimes \kappa_{11} & I_{2} \otimes \kappa_{12} \\
      I_{2} \otimes \kappa_{12}^{T} & P \otimes \kappa_{22}
    \end{bmatrix} \\
  &= \hat{\kappa}
\end{align*}
in view of \eqref{eq:hatkappa}.
Therefore, we have
\begin{equation*}
  \bar{Q}_{\kappa}(\zeta_{1}) - \bar{Q}_{\kappa}(\zeta_{0}) =
  \frac{1}{2} \hat{\zeta}_{1}^{T} \hat{\kappa} \hat{\zeta}_{1}
  - \frac{1}{2} \hat{\zeta}_{0}^{T} \hat{\kappa} \hat{\zeta}_{0}
  = \hat{Q}_{\kappa}(\hat{\zeta}_{1}) - \hat{Q}_{\kappa}(\hat{\zeta}_{0}).
\end{equation*}
This completes the proof of \Cref{thm:main}.

\section{Discussion and Numerical Results}
\label{sec:numerical_results}
\subsection{Discussion: Conservation and Non-conservation}
\label{ssec:discussion}
As we have mentioned in \Cref{rem:Pihajoki}, \Cref{lem:Pihajoki} does \textit{not} imply that Pihajoki's \textit{integrator}~\eqref{eq:Pihajoki-Strang} preserves linear and quadratic invariant of the \textit{original} Hamiltonian system~\eqref{eq:Ham} in $T^{*}\R^{d}$.
This is because the existence of an invariant of the integrator~\eqref{eq:Pihajoki-Strang} in the \textit{extended} phase space $T^{*}\R^{2d}$ does \textit{not} imply the existence of an invariant in the \textit{original} phase space $T^{*}\R^{d}$.
More specifically, note that $\hat{\zeta}_{1} = (\hat{q}_{1}, \hat{x}_{1}, \hat{p}_{1}, \hat{y}_{1})$ does \textit{not} satisfy $(\hat{x}_{1}, \hat{y}_{1}) = (\hat{q}_{1}, \hat{p}_{1})$ in general even if $\hat{\zeta}_{0} = (\hat{q}_{0}, \hat{x}_{0}, \hat{p}_{0}, \hat{y}_{0})$ satisfies $(\hat{x}_{0}, \hat{y}_{0}) = (\hat{q}_{0}, \hat{p}_{0})$.
Therefore, even if $\hat{L}_{a}(\hat{\zeta}_{0})$ is written in terms of $(\hat{q}_{0}, \hat{p}_{0})$ and is an invariant of the integrator~\eqref{eq:Pihajoki-Strang}, one has $\hat{L}_{a}(\hat{\zeta}_{1})$ in terms of $(\hat{q}_{1}, \hat{x}_{1}, \hat{p}_{1}, \hat{y}_{1})$ with $(\hat{x}_{1}, \hat{y}_{1}) \neq (\hat{q}_{1}, \hat{p}_{1})$ in the next step.
Hence it is impossible to interpret $\hat{L}_{a}$ as an invariant on the \textit{original} phase space $T^{*}\R^{d}$ in terms of $(q,p)$.
The same goes with the quadratic invariant $\hat{Q}_{\kappa}$.

Tao's integrator~\eqref{eq:Tao-Strang} also has the same issue along with an additional issue for quadratic invariants: Tao's extended system~\eqref{eq:Tao} lacks the inheritance of quadratic invariants; see \Cref{ssec:inheritance-Tao}.
So even if the original Hamiltonian system~\eqref{eq:Ham} possesses a quadratic invariant in $T^{*}\R^{d}$, Tao's extended system~\eqref{eq:Tao} may not have a corresponding invariant even in the extended phase space $T^{*}\R^{2d}$ due to the additional step (see \eqref{eq:Tao-Strang}) added to prevent the defect from growing, as we shall discuss in \Cref{sec:Tao}.

One can also see these issues more explicitly in terms of the defect
\begin{equation*}
  \delta{z}_{n} \defeq (\delta{q}_{n}, \delta{p}_{n}) \defeq (\hat{x}_{n} - \hat{q}_{n}, \hat{y}_{n} - \hat{p}_{n})
\end{equation*}
at the $n^{\rm th}$ step of numerical solution.
For a linear invariant, \Cref{lem:Pihajoki} implies that, for any $n \in \N$,
\begin{equation*}
  \hat{L}_{a}(\hat{\zeta}_{n}) = \hat{L}_{a}(\hat{\zeta}_{0}) \defeq \ell_{0}
%  &\iff
%    a_{q} q_{n} + a_{p} p_{n} = \ell_{0} + \frac{1}{2} (a_{q} \delta{q}_{n} + a_{p} \delta{p}_{n}) \\
  \implies
  L_{a}(\hat{q}_{n}, \hat{p}_{n}) = \ell_{0} + \frac{1}{2} L_{a}(\delta{z}_{n}).
\end{equation*}
Hence the deviation of the original invariant $L_{a}$ from the constant value $\ell_{0}$ is proportional to the defect.
Since the defect $\delta{z}_{n}$ often tends to grow for Pihajoki's integrator, $L_{a}$ may also grow as well.
For Tao's integrator, $\delta{z}_{n}$ tends to oscillate without drift, and so $L_{z}(z_{n})$ also oscillates in a similar way as we shall see in a moment.
We shall numerically demonstrate these issues in the next subsection.

Interestingly, however, Pihajoki's integrator preserves those linear invariants in terms of $q$ or $p$ only, i.e., of the form $a_{q}^{T} q$ and $a_{p}^{T} p$ with $a_{q}, a_{p} \in \R^{d}$.
This follows from a straightforward calculation based on the definition of the integrator.
On the other hand, Tao's integrator does not possess the same property again due to the additional step.

\Cref{tab:comparison} gives a summary of which integrator preserves what types of invariants exactly.

\begin{table}[htbp]
  \renewcommand{\arraystretch}{1.2}
  \centering
  \caption{
    Preservation/non-preservation of linear and quadratic invariants by three extended phase space integrators, where $z = (q, p) \in \R^{2d}$, $a \in \R^{2d}$, $a_{q}, a_{p} \in \R^{d}$, and $\kappa \in \sym(2d,\R)$.
    The check mark (\cmark) indicates that the integrator preserves the invariant of the original Hamiltonian system~\eqref{eq:Ham} of the given form exactly in general, whereas the cross mark (\xmark) indicates that the integrator does not preserve it exactly in general.
  }
  \begin{tabular}{c|c|c|c|}
    %\cline{2-4}
    & \multicolumn{3}{c|}{Invariant} \\\cline{2-4}
    & $a^{T} z$ & $a_{q}^{T} q$ or $a_{p}^{T} p$ & $z^{T}\kappa z/2$ \\\hline
    \citet{Pi2015} & \xmark & \cmark & \xmark \\\hline
    \citet{Ta2016b} & \xmark & \xmark & \xmark \\\hline
    Semiexplicit~\cite{JaOh2023} & \cmark & \cmark & \cmark \\\hline
  \end{tabular}
  \label{tab:comparison}
\end{table}

\subsection{Numerical Results}
In order to numerically demonstrate the results in \Cref{tab:comparison}, let us first devise a simple test case that possesses both linear and quadratic invariants:
\begin{example}[Test case with $d = 2$]
  \label{ex:test_case}
  Consider the Hamiltonian system~\eqref{eq:Ham} with the following non-separable Hamiltonian on $T^{*}\R^{2}$:
  \begin{gather*}
    H(q, p) = \exp( f(q_{1}, p_{1}) ) \sin( g(q_{2}, p_{2}) ), \\
    f(x,y) \defeq \frac{1}{10} (2x - 3y),
    \qquad
    g(x,y) \defeq \frac{1}{4}( x^{2} + 2y^{2} ),
  \end{gather*}
  where $q = (q_{1}, q_{2}), p = (p_{1}, p_{2}) \in \R^{2}$; the subscripts stand for components not the time steps here.
  
  It is straightforward to show that $L(q,p) \defeq f(q_{1}, p_{1})$ is a linear invariant of the system; this implies that $Q(q,p) \defeq g(q_{2},p_{2})$ is a quadratic invariant of the system because the Hamiltonian $H(q,p)$ is an invariant.
  As a result, the trajectories are very simple: a straight line $f(q_{1}, p_{1}) = \text{const.}$ on the $q_{1}$-$p_{1}$ plane and an ellipse $g(q_{2}, p_{2}) = \text{const.}$ on the $q_{2}$-$p_{2}$ plane.

  \Cref{fig:test_case} shows the time evolutions of the norm $\norm{ (x - q, y - p) }$ of the defect and the relative errors of the invariants $L$ and $Q$ using Pihajoki's, Tao's, and our semiexplicit integrators with the initial condition $q(0) = (-1, 2)$ and $p(0) = (1, -1)$ and the time step $\dt = 0.1$.
  \begin{figure}[htbp]
    \centering
    \includegraphics[width=0.9\linewidth]{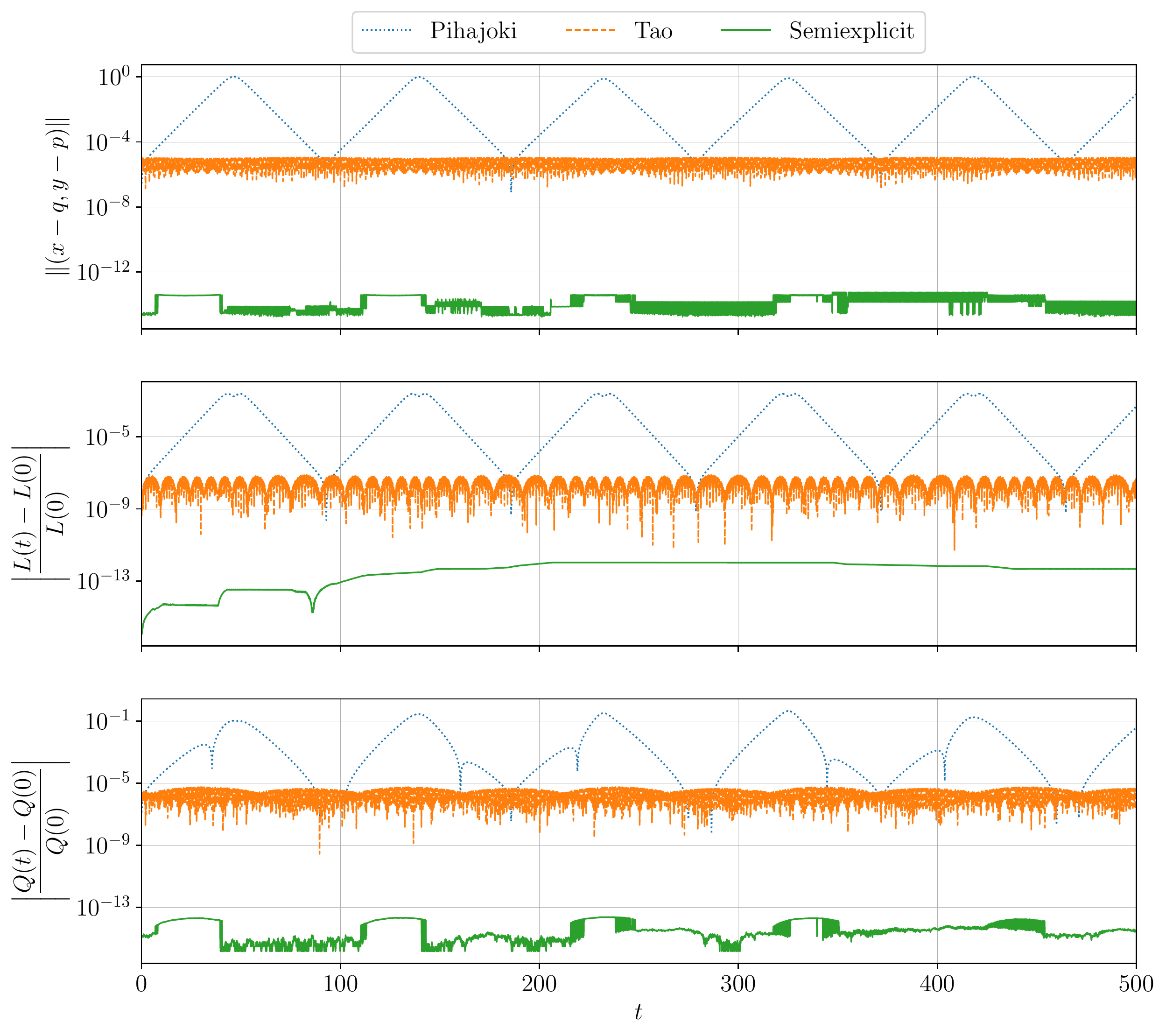}
    \caption{
      Time evolutions of norm $\norm{ (x - q, y - p) }$ of defect and relative errors of linear and quadratic invariants $L$ and $P$ for test case in \Cref{ex:test_case} with $\dt = 0.1$.
      Tao's integrator uses $\omega = 10$, and the tolerance for the nonlinear solver in the semiexplicit integrator is $\epsilon = 10^{-14}$.
      We defined $L(t) \defeq L(q(t),p(t))$ and similarly for $Q$.
      Those points that give 0 for vertical values are removed from the plots.
    }
    \label{fig:test_case}
  \end{figure}
  
  We observe that, despite the simplicity of the solution behavior, Pihajoki's and Tao's integrators preserve neither of the linear and quadratic invariants $L$ and $Q$ exactly.
  On the other hand, our semiexplicit integrator preserves both invariants roughly up to the tolerance $\epsilon = 10^{-14}$ used in the nonlinear solver for $\mu$ (the simplified Newton method discussed in \cite[Section~4.1]{JaOh2023}).
  This demonstrates the exact preservation stated in \Cref{thm:main}.
  One also observes that, for Pihajoki's and Tao's integrators, the fluctuation of the invariants is roughly proportional to the norm of the defect $(x - q, y - p)$ as discussed above.
\end{example}

Let us next consider a more practical example that possesses both linear and quadratic invariants:
\begin{example}[Point vortices]
  \label{ex:vortices}
  We consider the dynamics of $N$ point vortices in $\mathbb{R}^{2}$ with circulations $\{ \Gamma_{i} \in \R\backslash\{0\} \}_{i=1}^{N}$.
  The motion of the centers $\{ \mathbf{x}_{i} = (\mathsf{x}_{i},\mathsf{y}_{i}) \in \R^{2} \}_{i=1}^{N}$ of the vortices is governed by
  \begin{equation*}
    \dot{\mathsf{x}}_{i} = -\frac{1}{2\pi} \sum_{\substack{1\le j \le N\\ j \neq i}} \Gamma_{j} \frac{\mathsf{y}_{i} - \mathsf{y}_{j}}{\norm{\mathbf{x}_{i} - \mathbf{x}_{j}}^{2}},
    \qquad
    \dot{\mathsf{y}}_{i} = \frac{1}{2\pi} \sum_{\substack{1\le j \le N\\ j \neq i}} \Gamma_{j} \frac{\mathsf{x}_{i} - \mathsf{x}_{j}}{\norm{\mathbf{x}_{i} - \mathbf{x}_{j}}^{2}}
  \end{equation*}
  for $i \in \{1, \dots, N\}$; see, e.g., \citet[Section~2.1]{Ne2001} and \citet[Section~2.1]{ChMa1993}.
  It is known to be a Hamiltonian system with the Hamiltonian
  \begin{equation*}
    H(\mathbf{x}_{1}, \dots, \mathbf{x}_{N})
    \defeq -\frac{1}{4\pi} \sum_{1\le i < j \le N} \Gamma_{i} \Gamma_{j} \ln\norm{\mathbf{x}_{i} - \mathbf{x}_{j}}^{2},
  \end{equation*}
  but not in the canonical sense.
  However, one may rewrite the system in the canonical form~\eqref{eq:Ham} upon the change of coordinates
  \begin{equation*}
    (\mathsf{x}_{i}, \mathsf{y}_{i}) \mapsto \parentheses{ \sqrt{|\Gamma_{i}|}\, \mathsf{x}_{i}, \sqrt{|\Gamma_{i}|}\, \sgn(\Gamma_{i})\, \mathsf{y}_{i} } \eqdef (q_{i}, p_{i}),
  \end{equation*}
  where $\sgn(x)$ is $1$ if $x > 0$ and $-1$ otherwise.
  So we have $d = N$ here, and the Hamiltonian is again non-separable.

  This system has three invariants in addition to the Hamiltonian:
  \begin{equation}
    \label{eq:invariants-vortices}
    \begin{array}{c}
      \DS L_{a}(z) \defeq \sum_{i=1}^{N} \Gamma_{i} \mathsf{x}_{i} = \sum_{i=1}^{N} \sqrt{|\Gamma_{i}|}\, \sgn(\Gamma_{i})\, q_{i},
      \qquad
      L_{b}(z) \defeq \sum_{i=1}^{N} \Gamma_{i} \mathsf{y}_{i} = \sum_{i=1}^{N} \sqrt{|\Gamma_{i}|}\, p_{i},
      \\
      \DS Q_{\kappa}(z) \defeq \sum_{i=1}^{N} \Gamma_{i} \norm{ \mathbf{x}_{i} }^{2} = \sum_{i=1}^{N} \sgn(\Gamma_{i}) \parentheses{ q_{i}^{2} + p_{i}^{2} }
    \end{array}
  \end{equation}
  with
  \begin{align*}
    a &\defeq \parentheses{ \sqrt{|\Gamma_{1}|}\, \sgn(\Gamma_{1}), \dots, \sqrt{|\Gamma_{N}|}\, \sgn(\Gamma_{N}), 0, \dots, 0 } \in \R^{2N}, \\
    b &\defeq \parentheses{ 0, \dots, 0, \sqrt{|\Gamma_{1}|}, \dots, \sqrt{|\Gamma_{N}|} } \in \R^{2N}, \\
    \kappa &\defeq 2
             \begin{bmatrix}
               \sigma & 0 \\
               0 & \sigma
             \end{bmatrix}
                   \in \sym(2N,\R)
                   \quad\text{with}\quad
                   \sigma \defeq \diag\parentheses{ \sgn(\Gamma_{1}), \dots, \sgn(\Gamma_{N}) }.
  \end{align*}
  The pair $(L_{a},L_{b})$ is called the linear impulse, and $Q_{\kappa}$ is called the angular impulse.

  We consider the case with four vortices ($N = d = 4$) with circulations
  \begin{equation*}
    (\Gamma_{1}, \Gamma_{2}, \Gamma_{3}, \Gamma_{4}) = (4, -3, -2, 7).
  \end{equation*}
  \Cref{fig:test_case} shows the time evolutions of the norm $\norm{ (x - q, y - p) }$ of the defect and the relative errors of the linear and quadratic invariants $(L_{a}, L_{b})$ and $Q_{\kappa}$ using Pihajoki's, Tao's, and our semiexplicit integrators with the initial positions of the vortices at
  \begin{equation*}
    \bigl\{ (\mathsf{x}_{i}(0), \mathsf{y}_{i}(0)) \bigr\}_{i=1}^{4} = \bigl\{ (1, 2),\, (-3/2, 1),\, (-3, -1),\, (2, 1/2) \bigr\},
  \end{equation*}
  and with time step $\dt = 0.05$; the tolerance for the nonlinear solver in the semiexplicit integrator is again $\epsilon = 10^{-14}$.
  \begin{figure}[htbp]
    \centering
    \includegraphics[width=0.9\linewidth]{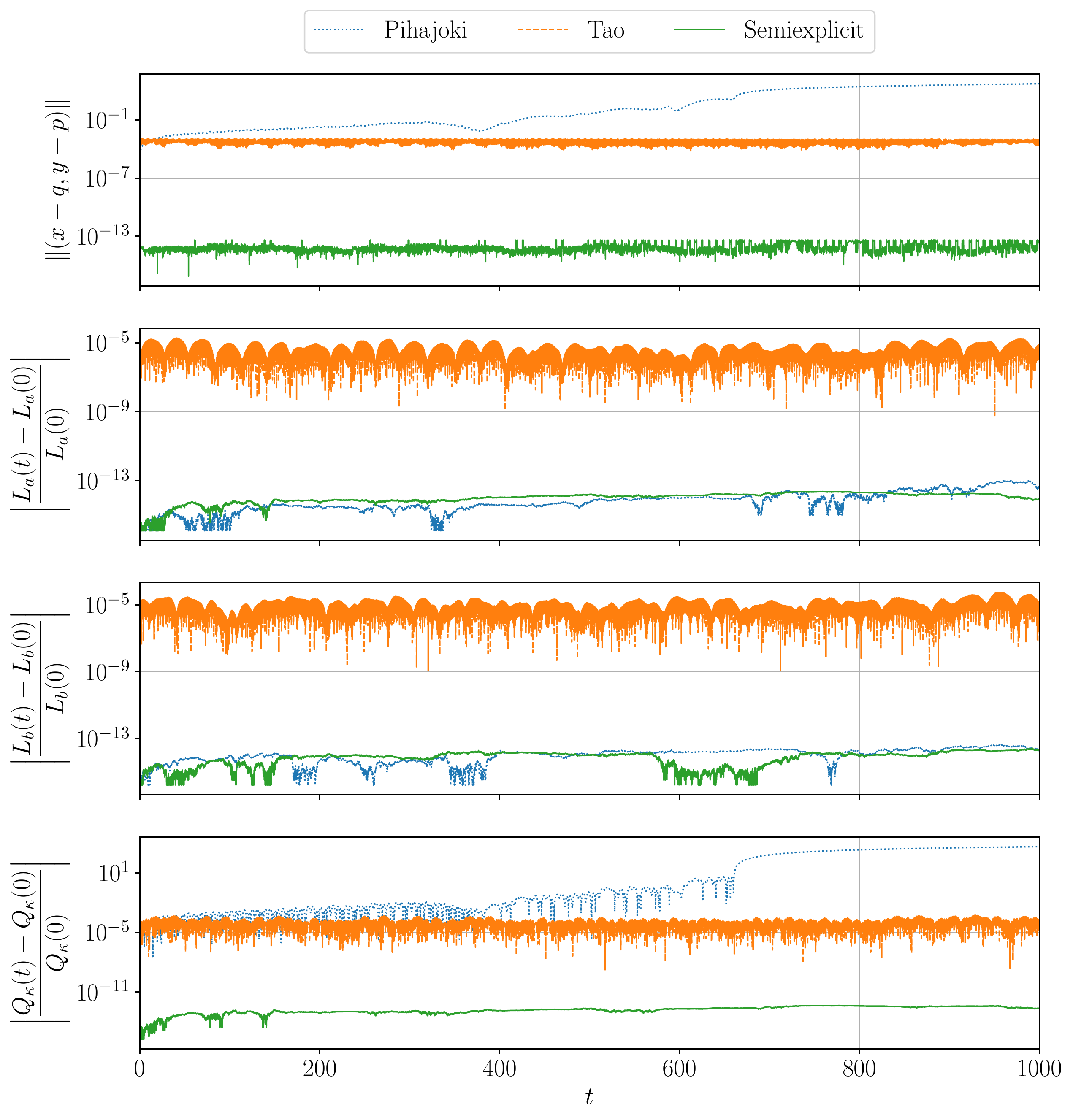}
    \caption{
      Time evolutions of norm $\norm{ (x - q, y - p) }$ of defect and relative errors of linear and quadratic invariants $(L_{a}, L_{b})$ and $Q_{\kappa}$ for the 4-vortex problem from \Cref{ex:vortices} with $\dt = 0.05$.
      The rest of the details is the same as in \Cref{fig:test_case}.
    }
    \label{fig:4vortices}
  \end{figure}

  We observe that Pihajoki's integrator preserves the linear invariants $(L_{a}, L_{b})$ almost exactly despite a problematic growth of the defect.
  This is because these linear invariants are the kind that is exactly preserved by Pihajoki's integrator; see \Cref{tab:comparison} and \eqref{eq:invariants-vortices}.
  On the other hand, the relative error in the quadratic invariant $Q_{\kappa}$ is growing even to the scale of 10 to 100.
  One also sees that the growth is roughly proportional to the defect, again as discussed in \Cref{ssec:discussion}.
  
  Tao's integrator exhibits good preservation of the linear and quadratic invariants.
  However, note that Tao's integrator does not preserve any of the invariants exactly as shown in \Cref{tab:comparison}.
  Indeed, one again sees that the errors are roughly proportional to the defect as discussed in \Cref{ssec:discussion}.
  
  On the other hand, our semiexplicit integrator preserves all three linear and quadratic invariants roughly up to the tolerance $\epsilon = 10^{-14}$.
  This result again demonstrates the exact preservation stated in \Cref{thm:main} and \Cref{tab:comparison}.
\end{example}

\section*{Acknowledgments}
I would like to thank Buddhika Jayawardana for helpful discussions, Robert McLachlan for insightful comments, and the editor and the reviewers for their criticisms and comments.
This work was supported by NSF grant DMS-2006736.

\appendix

\section{Numerical Results on Efficiency}
\label{sec:efficiency}
\subsection{Semiexplicit vs.~Gauss--Legendre}
As pointed out by one of the reviewers of the present paper, our implementation of the Gauss--Legendre (GL) methods in our previous work~\cite{JaOh2023} using full Newton's method was very inefficient, and resulted in inflating the computational costs for the GL methods.

Following a suggestion from the reviewer, we implemented the GL methods using fixed point iterations instead, and performed a numerical study on the computational costs using the same examples from \cite{JaOh2023}.
The results, as we shall show in the subsections to follow, suggest that the semiexplicit integrator and the GL methods are comparable in computational efficiency.
While the 2nd- and 4th- order GL methods are faster than the semiexplicit ones of the same orders, the 6th-order method of the latter can be faster than the former of the same order.

The computational cost of these methods is a trade-off between the number of evaluations of the vector field per single iteration and the number of iterations.
In general, the semiexplicit integrator requires more evaluations of the vector field compared to the GL method of the same order \textit{per single iteration} for solving the nonlinear equation.
However, the vector $\mu$ (see \Cref{ssec:semiexplicit}) in the semiexplicit method is typically very small especially for higher-order methods, because $\mu$ is a quantity that vanishes for the exact solutions.
On the other hand, the GL methods need to solve for unknowns of $O(\dt)$ in general.
Therefore, as we shall see in the results to follow, the semiexplicit method usually requires fewer iterations especially with higher-order methods, while the GL methods tend to require more or less the same number of iterations for all orders.

Another reason why the semiexplicit method can compensate for the disadvantage with higher-order methods is the following:
While the semiexplicit method solves for $\mu \in \R^{2d}$ regardless of the order and the number of stages, the GL methods with $s$ stages ($s = 2, 3$ for the 4th- and 6th-order methods) solves for unknowns in $\R^{2sd}$.

\subsection{Finite-Dimensional NLS}
As the first test case, we consider the finite-dimensional NLS from \Cref{ex:NLS}.
Following \citet{Ta2016b}, we have $d = 5$, $\omega = 100$, and $q(0) = (3,0.01,0.01,0.01,0.01)$ and $p(0) = (1,0,0,0,0)$.

See \Cref{tab:NLS} for the results.
The 4th-order and 6th-order versions (\textsf{Tao\,4} and \textsf{Tao-Y\,6}) of Tao's method use the Triple Jump composition (see \cite{CrGo1989,Forest1989,Su1990,Yo1990}; also \cite[Example~II.4.2]{HaLuWa2006}) and the composition of \citet{Yo1990}, respectively.
The same goes with the 4th- and 6th-order semiexplicit methods (\textsf{semiexplicit\,4} and \textsf{semiexplicit-Y\,6}).
The \textsf{GL\,$n$} stands for the $n$-th order Gauss--Legendre method; see, e.g., \cite[Section~II.1.3]{HaLuWa2006} and \cite[Table~6.4 on p.~154]{LeRe2004} implemented with fixed point iterations~\cite[Section~VIII.6]{HaLuWa2006}.

\begin{table}[htbp]
  \caption{
    Comparison of computation times of various methods when solving NLS~\eqref{eq:H-NLS} with time step $\dt = 10^{-3}$ and terminal time $T=10^{3}$.
    \texttt{Time} is the computation time in seconds averaged over 5 simulations.
    \texttt{Itr} is the average number of iterations used per step in the simplified Newton and fixed point iterations for the semiexplicit and the GL methods, respectively.
    \texttt{VF\_eval} is the average number of evaluations of the vector field.
    Computations were performed using Julia on a computer with an Apple M1 Pro processor.
  }
  \centering
  \begin{tabular}{l|rcc||rcc}
    & \multicolumn{3}{c||}{$\epsilon=10^{-10}$} & \multicolumn{3}{c}{$\epsilon=10^{-13}$} \\ \hline
    \textsf{Method} & \multicolumn{1}{c}{\texttt{Time}} & \texttt{Itr} & \texttt{VF\_eval} & \multicolumn{1}{c}{\texttt{Time}} & \texttt{Itr} & \texttt{VF\_eval} \\ \hline
    \textsf{Tao\,2}	       & 9.23  & 	&  4    &  9.23	&  	& 4 \\
    \textsf{semiexplicit\,2}   & 23.00 & 3.385  & 10.16 & 32.47 & 5.000 & 15.00\\
    \textsf{GL\,2}             & 15.03 & 5.999	& 5.999 & 17.31	& 7.000	& 7.000 \\ \hline
    \textsf{Tao\,4}	       & 22.99 & 	& 12    & 22.99	&   	& 12 \\
    \textsf{semiexplicit\,4}   & 31.96 & 1.948	& 17.53 & 47.47 & 3.000 & 27 \\
    \textsf{GL\,4}	       & 26.08 & 5.000	& 10.00 & 31.35	& 6.320	& 12.64 \\ \hline
    \textsf{Tao-Y\,6}	       & 50.86 & 	& 28    & 50.86	& 	& 28 \\
    \textsf{semiexplicit-Y\,6} & 35.29 & 1.002	& 21.04 & 35.39	& 1.017	& 21.36 \\
    \textsf{GL\,6}	       & 39.64 & 5.000	& 15.00 & 48.50	& 6.234 & 18.70 \\ \hline
  \end{tabular}
  \label{tab:NLS}
\end{table}

\subsection{Point Vortices}
As the second test case, consider the vortex dynamics from \Cref{ex:vortices} with 10 vortices ($N=10$) of circulations
\begin{equation}
  \label{eq:Gammas}
  (\Gamma_{1}, \dots, \Gamma_{10}) = \frac{1}{10}(-5,3,6,7,-2,-8,-9,-3,7,-6).
\end{equation}
and the initial condition
\begin{equation}
  \label{eq:IC-10vortex}
  \begin{split}
    \bigl\{ (\mathsf{x}_{i}(0), \mathsf{y}_{i}(0)) \bigr\}_{i=1}^{10}
    = \bigl\{&
    (3, -5),\,
    (-10, -6),\,
    (6, 0),\,
    (9, -2),\,
    (0, 0),\, \\
    &(7, 10),\,
    (-8, 2),\,
    (5, 9),\,
    (9, 0),\,
    (7, -1)
    \bigr\}.
  \end{split}
\end{equation}
See \Cref{tab:10vortices} for the results.

\begin{table}[htbp]
  \caption{
    Comparison of computation times of various methods when solving the 10-vortex system with the parameters given in \eqref{eq:Gammas} and \eqref{eq:IC-10vortex}, and time step $\dt = 0.1$ and terminal time $T=10^{3}$.
    The other details are the same as \Cref{tab:NLS} except $\omega = 7$ for Tao's method.
  }
  \centering
  \begin{tabular}{l|rcc||rcc}
   & \multicolumn{3}{c||}{$\epsilon=10^{-10}$} & \multicolumn{3}{c}{$\epsilon=10^{-13}$} \\ \hline
    \textsf{Method} & \multicolumn{1}{c}{\texttt{Time}} & \texttt{Itr} & \texttt{VF\_eval} & \multicolumn{1}{c}{\texttt{Time}} & \texttt{Itr} & \texttt{VF\_eval} \\ \hline
    \textsf{Tao\,2}	       & 5.66   & 	& 4     & 5.66  &  	 & 4 \\
    \textsf{semiexplicit\,2}   & 8.43	& 2.014 & 6.042 & 13.26 & 3.045  & 9.135 \\
    \textsf{GL\,2}             & 5.63	& 4.023	& 4.023 & 8.65  & 5.997  & 5.997 \\ \hline
    \textsf{Tao\,4}	       & 17.02  & 	& 12    & 17.02 &        & 12 \\
    \textsf{semiexplicit\,4}   & 12.52	& 1.001	& 9.009 & 20.47 & 1.588  & 14.29 \\
    \textsf{GL\,4}	       & 10.99	& 4.000	& 8.000 & 14.29 & 5.000  & 10.00 \\ \hline
    \textsf{Tao-Y\,6}	       & 39.23	& 	& 28    & 39.23 & 	 & 28 \\
    \textsf{semiexplicit-Y\,6} & 28.76	& 1.000	& 21.00 & 30.06 & 1.001  & 21.02 \\
    \textsf{GL\,6}	       & 16.47	& 4.000	& 12.00 & 21.45 & 5.000  & 15.00 \\ \hline
  \end{tabular}
  \label{tab:10vortices}
\end{table}

\section{Limitation of Inheritance by Tao's Extended System}
\label{sec:Tao}
\subsection{Tao's Extended Phase Space Integrator}
In order to suppress the defect $(x - q, y - p)$ that often grows with Pihajoki's integrator~\eqref{eq:Pihajoki-Strang}, \citet{Ta2016b} proposed to solve the extended system
\begin{equation}
  \label{eq:Tao}
  \begin{aligned}
    \dot{q} &= D_{2}H(x, p) + \omega(p - y), \qquad & \dot{p} &= -D_{1}H(q, y) - \omega(q - x), \medskip\\
    \dot{x} &= D_{2}H(q, y) + \omega(y - p), \qquad  & \dot{y} &= -D_{1}H(x, p) - \omega(x - q)
  \end{aligned}
\end{equation}
with some $\omega \in \R \backslash\{ 0 \}$ instead of Pihajoki's extended system~\eqref{eq:Pihajoki}.
Note that the above system~\eqref{eq:Tao} is also a Hamiltonian system on the extended phase space $T^{*}\R^{2d}$ with the Hamiltonian
\begin{equation}
  \label{eq:hatH-Tao}
  \hat{H}_{\text{T}}(\zeta) \defeq \hat{H}(\zeta) + \hat{H}_{C}(\zeta)
  \quad\text{with}\quad
  \hat{H}_{C}(\zeta) \defeq \frac{\omega}{2}\parentheses{ (x - q)^{2} + (y - p)^{2} },
\end{equation}
where $\hat{H}$ is the extended Hamiltonian~\eqref{eq:hatH} for Pihajoki's system~\eqref{eq:Pihajoki}.
The Strang splitting~\cite{St1968} then yields the following $2^{\rm nd}$-order integrator:
\begin{equation}
  \label{eq:Tao-Strang}
  \hat{\Phi}^{A}_{\dt/2} \circ \hat{\Phi}^{B}_{\dt/2} \circ \hat{\Phi}^{C}_{\dt} \circ \hat{\Phi}^{B}_{\dt/2} \circ \hat{\Phi}^{A}_{\dt/2},
\end{equation}
where $\hat{\Phi}^{A}$ and $\hat{\Phi}^{B}$ are defined in \eqref{eq:PhiAB} and $\Phi^{C}$ is the (extended) Hamiltonian flow corresponding to $\hat{H}_{C}$.

\subsection{Inheritance and Non-inheritance of Linear and Quadratic Invariants}
\label{ssec:inheritance-Tao}
It turns out that, while the extended system~\eqref{eq:Tao} enjoys a similar inheritance property as in \Cref{prop:inheritance-linear} for linear invariants, it does not for quadratic ones as in \Cref{prop:inheritance}.

To see this for linear invariant, note that
\begin{align*}
  \{\hat{L}_{a}, \hat{H}_{C}\}_{\text{ext}}(\zeta) = \hat{a}^{T} \hat{\mathbb{J}}\,D\hat{H}_{C}(\zeta) = 0
  \quad \forall \zeta \in T^{*}\R^{2d}.
\end{align*}
Hence the additional term $\hat{H}_{C}$ in the Hamiltonian does not interfere with the preservation of $\hat{L}_{a}$ by Tao's extended \textit{system}~\eqref{eq:Tao}.
Note however that this does \textit{not} imply that Tao's \textit{integrator}~\eqref{eq:Tao-Strang} preserves the original linear invariant $L_{a}$ for the same reason discussed in \Cref{ssec:discussion} for Pihajoki's integrator.

On the other hand, for quadratic invariants,
\begin{align*}
  \{\hat{Q}_{\kappa}, \hat{H}_{C}\}_{\text{ext}}(\zeta) 
  &= \hat{\kappa}_{T^{*}\R^{2d}}(\zeta)^{T} D\hat{H}_{C}(\zeta) \\
  &= \delta{z}^{T}
    \begin{bmatrix}
      \kappa_{12} & -\kappa_{11} \\
      -\kappa_{22} & \kappa_{12}
    \end{bmatrix}
                     \delta{z}
                     \quad\text{with}\quad
                     \delta{z} \defeq
                     \begin{bmatrix}
                       x - q \\
                       y - p
                     \end{bmatrix}.
\end{align*}
Hence we have
\begin{equation*}
  \{\hat{Q}_{\kappa}, \hat{H}_{C}\}_{\text{ext}}(\zeta) = 0
  \quad \forall \zeta \in T^{*}\R^{2d}
  \iff
  \left\{
    \begin{aligned}
      \kappa_{12}^{T} = -\kappa_{12}, \\
      \kappa_{22} = -\kappa_{11}.
    \end{aligned}
  \right.
\end{equation*}
Therefore, a quadratic invariant $\hat{Q}_{\kappa}$ of Pihajoki's extended system~\eqref{eq:Pihajoki} is also an invariant of Tao's extended system~\eqref{eq:Tao} if and only if $\kappa \in \sym(2d,\R)$ takes the form
\begin{equation*}
  \kappa =
  \begin{bmatrix}
    \kappa_{11} & \kappa_{12} \\
    -\kappa_{12} & -\kappa_{11}
  \end{bmatrix}
  \quad\text{with}\quad
  \kappa_{11} \in \sym(d,\R),
  \
  \kappa_{12}^{T} = -\kappa_{12}.
\end{equation*}
However, this is rather restrictive.
Indeed, none of the quadratic invariants from \Cref{ex:NLS2,ex:test_case,ex:vortices} satisfy this condition.

\bibliographystyle{plainnat}
\bibliography{Semiexplicit2}

\end{document}